\theoremstyle{plain}
\newtheorem{thm}{Theorem}
\newtheorem{lemma}[thm]{Lemma}
\newtheorem{cor}[thm]{Corollary}
\newtheorem{prop}[thm]{Proposition}
\theoremstyle{definition}
\newtheorem{defi}[thm]{Definition}
\newtheorem{remark}[thm]{Remark}
\newtheorem{claim}[thm]{Claim}
\numberwithin{thm}{section}
\numberwithin{equation}{section}
\DeclareMathOperator{\dcc}{d_{\rm cc}}          
\DeclareMathOperator{\ellcc}{\ell_{cc}}     
\newcommand{\Ball}{B_{\rm cc}}          
\newcommand{\ABox}{\mathcal{B}}
\DeclareMathOperator{\dhh}{d_{\rm K}}           
\newcommand{\comp}[1]{#1^c}
\newcommand{\dpi}[1]{\operatorname{d}_{\pi_t}^{#1}}
\newcommand{\interior}{\operatorname{int}}
\newcommand{\dccOm}[1]{\operatorname{d}_{\rm cc}^{#1}}
\newcommand{\R}{\mathbb R}
\newcommand{\Heis}{\mathbb{H}}
\providecommand{\norm}[1]{\lVert#1\rVert}       
\newcommand{\comment}[1]{}
\title{Quasiconvexity in the Heisenberg group} 
\author{David A. Herron}
\address{Department of Mathematical Sciences \\ University of Cincinnati \\ French Hall West \\ Cincinnati, OH 45221-0025}
\email{David.Herron@math.UC.edu}
\author{Anton Lukyanenko}
\address{Department of Mathematics \\ University of Michigan \\ 530 Church Street, Ann Arbor, MI 48109}
\email{anton@lukyenenko.net}
\author{Jeremy T. Tyson}
\address{Department of Mathematics \\ University of Illinois \\ 1409 West Green St. \\ Urbana, IL, 61801}
\email{tyson@illinois.edu}
\date{\today}
\thanks{J.T.T. was supported by Simons Foundation Collaboration Grant 353627 `Geometric Analysis in Sub-Riemannian and Metric Spaces'.  D.A.H.\ was  supported by the Charles Phelps Taft Research Center.  A.L. was supported by NSF RTG grant  DMS-1045119.  All three authors were  supported by the NSF grant DMS-1500454.}
\begin{document}
\dedicatory{Dedicated to William Goldman on the occasion of his 60th birthday.}

\begin{abstract}
We show that if $A$ is a closed subset of the Heisenberg group whose vertical projections 
are nowhere dense, then the complement of $A$ is quasiconvex. In particular, closed sets which are null sets for the cc-Hausdorff $3$-measure have quasiconvex complements. Conversely, we exhibit a compact totally disconnected set of Hausdorff dimension three whose complement is not quasiconvex.
\end{abstract}

\maketitle

\section{Introduction}

A metric space $(X,d)$ is {\it $c$-quasiconvex}, $c \ge 1$, if every pair of points $x,y$ in $X$ can be joined by a rectifiable curve whose length is no more than $c \, d(x,y)$.

Quasiconvex spaces arise systematically in the theory of analysis in metric spaces. A folklore result attributed to Semmes (see, for instance, \cite[\S 17]{ch:differentiability}) asserts that every doubling metric measure space supporting a Poincar\'e inequality is quasiconvex. Also, all uniform domains and spaces are quasiconvex. On the other hand, every quasiconvex space is bi-Lipschitz equivalent to a length space. Since most of the machinery of analysis in metric spaces is invariant under bi-Lipschitz transformations, and since it is often convenient to work in the length space setting, quasiconvexity is an extremely relevant and useful assumption.

It is thus an interesting problem to determine which spaces are quasiconvex. In this paper we address this question for open subsets of the Heisenberg group $\Heis$ equipped with a sub-Riemannian metric. Our results parallel those obtained by Hakobyan and the second author \cite{hh:quasiconvexity} for open subsets of Euclidean space. In particular, we give both topological and measure-theoretic conditions on a closed set $A \subset \Heis$ which ensure that $\Heis \setminus A$ is quasiconvex.

Let us recall that the Heisenberg group $\Heis$ is the space $\R^3$ equipped with the nonabelian group law
$$  (x,y,t)*(x',y',t') := (x+x',y+y',t+t'+2(x'y-xy'))$$
and a natural left-invariant \emph{Carnot--Carath\'eodory} path metric $\dcc$ (see \S \ref{sec:summaryofproof} for the definition). The space $(\Heis,\dcc)$ is complete and geodesic.
Topologically, this space is homeomorphic to $\R^3$ itself, however, it is not bi-Lipschitz equivalent to $\R^3$. In fact, the Hausdorff dimension of $(\Heis,\dcc)$ is $4$, and the identity map from $\Heis$ to $\R^3$ is locally Lipschitz with a locally $\tfrac12$-H\"older continuous inverse. Thus, for instance, the Hausdorff dimension of a subset of $\Heis$ in the cc-metric and in the underlying Euclidean metric need not agree. The precise relationship between these two notions of Hausdorff dimension is well understood: this is the content of the so-called {\it Dimension Comparison Theorem}. See Theorem \ref{DCT}.

While $\R^n$ contains plenty of convex domains, convexity in the Heisenberg group is a problematic concept.
For instance, Monti and Rickly \cite{mr:convex} showed that any nonempty set $A$ in $\Heis$ that is geodesically convex (that is, $A$ contains all geodesic segments joining any pair of points in $A$) is either a point, a connected subset of a geodesic, or all of $\Heis$. Alternate approaches to convexity in $\Heis$ have been considered by Lu, Manfredi and Stroffolini \cite{lms:convex} and Danielli, Garofalo and Nhieu \cite{dgn:convex}.

Due to the uniqueness of geodesics in Euclidean space, the notions of convexity and $1$-quasiconvexity are equivalent in that setting. It is well-known that geodesics in $\Heis$ are not unique, even locally. Put another way, the injectivity radius of $\Heis$ equals zero at every point. As a result, geodesic convexity and $1$-quasiconvexity need not coincide in this setting.


Many sets in $\Heis$ are quasiconvex. For example, it is easy to show that any half-space is quasiconvex (indeed, vertical half-spaces are even $1$-quasiconvex), and it follows from \cite[Theorem 1.3]{MR2135732} that any compact domain in $\Heis$ with $C^{1,1}$ boundary is quasiconvex and has a quasiconvex complement (see \cite{Fassler-Lukyanenko-Tyson} for an application of this fact in sub-Riemannian geometric mapping theory).

The semi-direct product decomposition of $\Heis$ as the product of a vertical homogeneous subgroup and a horizontal homogeneous subgroup induces projection mappings into each factor. For precise definitions of these projections, see Subsection \ref{subsec:projections}. We now state our first main theorem.

\begin{thm}\label{thm:main}
Let $A\subset \Heis$ be a closed set. Assume that the vertical projections of $A$ into the $xt$- and $yt$-planes are nowhere dense. Then $\Heis \setminus A$ is quasiconvex.
\end{thm}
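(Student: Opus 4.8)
The plan is to show that any two points $p,q \in \Heis \setminus A$ can be joined by a rectifiable curve in $\Heis\setminus A$ whose length is controlled by $\dcc(p,q)$, with a uniform constant $c$. The fundamental tool will be the horizontal/vertical projection structure: since the vertical projections $\pi_y(A) \subset \R^2_{xt}$ and $\pi_x(A) \subset \R^2_{yt}$ are nowhere dense, their complements $\Rxt$ and $\Ryt$ are open and dense. The key geometric idea is that a point in $\Heis$ lies outside $A$ as soon as \emph{one} of its projections avoids the corresponding projection of $A$; that is, $\pi_y^{-1}(\Rxt) \cup \pi_x^{-1}(\Ryt) \subset \Heis\setminus A$. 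So I expect to build connecting curves out of pieces that live in these two ``projection-avoiding slabs,'' each of which is a saturated open set whose geometry is governed by a two-dimensional picture.

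First I would reduce to joining $p$ and $q$ by a geodesic in $\Heis$ and then perturbing it off of $A$. A $\dcc$-geodesic projects (under $\pi_y$, say) to a curve in the $xt$-plane, and because $\pi_y(A)$ is nowhere dense, I can approximate or perturb the projected curve so that it avoids $\pi_y(A)$ except possibly on a small set. The main mechanism will be a ``pushing off'' lemma: given a short geodesic segment that meets $A$, replace it by a nearby curve in $\Heis\setminus A$ of comparable length. The nowhere-density hypothesis guarantees that arbitrarily close to any offending point there are nearby ``good'' values of the $xt$-projection (points in $\Rxt$), and left-invariance plus the homogeneity of $\dcc$ under the dilations lets me keep the length cost of each detour proportional to the scale of the segment it replaces.

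The core estimate I would isolate is a \emph{local} quasiconvexity statement at a fixed scale: there is a constant $c$ so that if $\dcc(p,q)$ is small relative to the local geometry, then $p$ and $q$ can be joined in $\Heis\setminus A$ with length $\le c\,\dcc(p,q)$. To get this I would cover a geodesic from $p$ to $q$ by finitely many balls, on each of which one of the two projection slabs is ``dense enough'' to route through, concatenate the resulting sub-curves, and control the total length by a geometric-series / telescoping argument so the constant does not blow up with the number of pieces. Left-invariance and scaling then upgrade the local statement to a global uniform constant, since the hypotheses on $A$ are preserved (in a scale-invariant sense) under translations and dilations of $\Heis$.

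The hard part, I expect, is the quantitative control of the detours: nowhere-density is purely topological and a priori gives no metric bound on how far one must travel to reach a projection-avoiding point, nor on how the lengths of successive detours accumulate. The crux is therefore to convert the qualitative density of $\Rxt$ and $\Ryt$ into a uniform length bound, presumably by exploiting the geometry of horizontal lines and vertical lines relative to the cc-metric and by a careful selection of detour endpoints so that the concatenated curve has length comparable to $\dcc(p,q)$ rather than merely finite. Handling the interaction of the two projections simultaneously (so that a detour fixing a problem for $\pi_y$ does not create one for $\pi_x$), and showing that the accumulated cost telescopes to a uniform constant, is where I anticipate the main technical effort will lie.
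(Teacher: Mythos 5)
Your proposal has a genuine gap, and it sits exactly at the point you yourself flag as ``the hard part.'' Your plan is to take a cc-geodesic from $p$ to $q$ and perturb it so that its projection (under $\pi_y$, say) avoids the nowhere dense set $\pi_y(A)$. But nowhere density only lets you perturb \emph{points} off a set, not \emph{curves}: a nowhere dense closed subset of the $xt$-plane can contain a circle or a line segment, and any planar curve forced to cross such a continuum must meet it, no matter how it is perturbed. So the ``pushing off'' lemma you posit is false in general, and no covering, telescoping, or scaling argument can repair it, because the obstruction is topological, not quantitative. Using both projections does not obviously help either: the complement of the union of your two slabs is $\pi_y^{-1}(\pi_y(A))\cap\pi_x^{-1}(\pi_x(A))$, which can be far larger than $A$ and can still separate, so you have no a priori way to route a perturbed geodesic with controlled length.

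The idea you are missing is the one the paper builds its proof on: do not start from a geodesic at all, but from a \emph{bang-bang} path. The paper first shows (its Lemma 3.1) that any $p,q$ are joined by a concatenation of at most four $X$-line and $Y$-line segments of total length at most $5\sqrt2\,\dcc(p,q)$. The crucial feature is that an $X$-line segment projects under $\pi_x$ to a \emph{single point} (and a $Y$-line segment to a single point under $\pi_y$). Hence, to move an entire segment off $A$, one only needs to perturb one point of $\Heis$ so that its projection lands in the dense open complement of the relevant nowhere dense projection --- an arbitrarily small perturbation, available by density alone, and costing nothing in length since the perturbed segment keeps the same direction and length. The perturbed segments no longer share endpoints, but since $A$ is closed, small cc-balls around the original junction points lie in $\comp{A}$, so consecutive endpoints are reconnected by geodesics of length less than $\epsilon$; choosing $\epsilon$ small compared to $\dcc(p,q)$ (the paper's Claims 3.2 and 3.3 handle the endpoint near $q$, where left-translation noncommutativity requires a short separate argument) gives a curve in $\comp{A}$ of length at most $5\sqrt{2}\,\dcc(p,q)+5\,\dcc(p,q)$. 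This is exactly how the purely topological hypothesis is converted into a uniform quasiconvexity constant --- the conversion your proposal correctly identifies as the crux but does not supply.
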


In particular, if the vertical projections of a closed set $A$ have cc-Hausdorff $3$-measure zero, then the conclusion of Theorem \ref{thm:main} continues to hold. Note that vertical planes in $\Heis$ have cc-Hausdorff dimension $3$, and the cc-Hausdorff $3$-measure on such a plane coincides (up to a constant multiple) with the standard Euclidean surface area measure. Since vertical projection mappings preserve null sets for the $3$-measure, we derive the following corollary.

\begin{cor}\label{cor:main}
Let $A\subset \Heis$ be a closed set of cc-Hausdorff $3$-measure zero. Then $\Heis \setminus A$ is quasiconvex.
\end{cor}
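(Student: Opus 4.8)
The plan is to deduce the corollary from Theorem \ref{thm:main} by verifying, after a localization, that a closed set of cc-Hausdorff $3$-measure zero has nowhere dense vertical projections. First I would record the two measure-theoretic inputs highlighted above. Since the vertical projections $\pi_y$ and $\pi_x$ preserve null sets for the cc-Hausdorff $3$-measure, the hypothesis $\cH^3_{\mathrm{cc}}(A)=0$ forces $\pi_y(A)\subset\R^2_{xt}$ and $\pi_x(A)\subset\R^2_{yt}$ to be null for the cc-Hausdorff $3$-measure as well. Because that measure agrees, up to a multiplicative constant, with Euclidean surface area on each vertical plane, the two projections have planar Lebesgue measure zero in their respective planes.

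The remaining task is to upgrade ``measure zero'' to the topological hypothesis ``nowhere dense'' demanded by Theorem \ref{thm:main}. This is where the real work lies: a planar set of measure zero need not be nowhere dense, and the continuous image $\pi_y(A)$ of the merely closed set $A$ need not itself be closed. My approach is to exhaust $A=\bigcup_k K_k$ by the compacta $K_k:=A\cap\overline{\Ball}(0,k)$. Each $K_k$ is a compact subset of $A$, hence null for the cc-Hausdorff $3$-measure, so by the previous paragraph $\pi_y(K_k)$ and $\pi_x(K_k)$ have planar measure zero; being continuous images of compact sets they are compact, hence closed. A closed planar set of measure zero has empty interior and so is nowhere dense.

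To conclude I would run the argument semilocally, exploiting that quasiconvexity is witnessed by paths confined to a bounded region. Let $c_0$ denote the quasiconvexity constant furnished by Theorem \ref{thm:main}, and note that the decisive point is that $c_0$ can be taken independent of the particular set to which the theorem is applied. Given $p,q\in\Heis\setminus A$, set $R:=2c_0\,\dcc(p,q)$ and $K:=A\cap\overline{\Ball}(p,R)$. Then $K$ is compact with nowhere dense vertical projections into the $xt$- and $yt$-planes, by the previous paragraph, so Theorem \ref{thm:main} yields a rectifiable curve from $p$ to $q$ in $\Heis\setminus K$ of length at most $c_0\,\dcc(p,q)$. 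Such a curve stays inside $\overline{\Ball}(p,c_0\,\dcc(p,q))\subset\overline{\Ball}(p,R)$, a region in which $A$ coincides with $K$; hence the curve avoids all of $A$, and this certifies $c_0$-quasiconvexity of $\Heis\setminus A$.

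I expect the main obstacle to be precisely the passage from null-measure information to the nowhere density hypothesis of Theorem \ref{thm:main}. The global projection $\pi_y(A)$ of a closed cc-$3$-null set can in fact be dense, for instance when points of $A$ escape to infinity along the projection fibers, so one cannot simply assert that it is nowhere dense; the compact exhaustion and localization above are exactly what repair this. The argument therefore hinges on the uniformity of the quasiconvexity constant in Theorem \ref{thm:main}, which I would extract from its proof.
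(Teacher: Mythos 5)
Your proof is correct, and it is genuinely more careful than the paper's own derivation, which is essentially a one-liner: the paper observes that $\pi_x$ and $\pi_y$ preserve null sets for the cc-Hausdorff $3$-measure and then invokes Theorem \ref{thm:main}, implicitly passing from ``the projections are null'' to ``the conclusion of Theorem \ref{thm:main} continues to hold.'' You correctly identify that this passage is not a literal application of the theorem: for an unbounded closed null set $A$ the projections need not be closed and can even be dense in their planes (place one point of $A$ in the fiber over each element of a countable dense subset of the $yt$-plane, with the points marching off to infinity so that $A$ stays closed and discrete), so ``nowhere dense'' genuinely fails. Your repair is sound in every step: $K=A\cap\overline{\Ball}(p,R)$ is compact because closed cc-balls are compact, its projections are compact and Lebesgue-null, hence nowhere dense; the quasiconvexity constant in the proof of Theorem \ref{thm:main} is indeed universal (the proof yields $c_0\le 5\sqrt{2}+5$, independent of the set), and a curve from $p$ of length at most $c_0\,\dcc(p,q)$ stays in $\overline{\Ball}(p,c_0\,\dcc(p,q))\subset\overline{\Ball}(p,R)$, where $A$ and $K$ agree. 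You are also right that uniformity of $c_0$ is the decisive point --- without it the choice of $R$ and the length bound chase each other circularly. For comparison, the repair the paper implicitly relies on is different and shorter: its proof of Theorem \ref{thm:main} never uses nowhere-density as such, only that every open subset of the $xt$- or $yt$-plane meets the complement of the relevant projection, i.e.\ that the projections have empty interior --- and that \emph{does} follow directly from their being null. So the paper's route requires reopening the proof of the theorem under a weakened hypothesis, while your black-box-plus-localization route derives the corollary from the theorem exactly as stated and yields an explicit uniform constant for $\Heis\setminus A$. (The paper also sketches an entirely different second proof in Remark \ref{rem:cor-main-alternate-proof}, via ACL/Sobolev removability and the Loewner property, which neither your argument nor the one-line derivation touches.)
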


Corollary \ref{cor:main} can also be provided directly by appealing to the ACL characterization of Sobolev functions and the geometry of Sobolev removable sets. See Remark \ref{rem:cor-main-alternate-proof} for details.

In particular, sets of cc-Hausdorff dimension strictly less than $3$ have quasiconvex complements. Our next result shows that this observation is sharp.

\begin{thm}\label{thm:main-example}
There exists a compact and totally disconnected set $A\subset \Heis$ of cc-Hausdorff dimension $3$ whose complement is not quasiconvex.
\end{thm}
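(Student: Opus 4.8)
The plan is to build $A$ as a totally disconnected Cantor-type set lying inside one vertical coordinate plane, and to defeat quasiconvexity by producing point pairs $p_n,q_n$ with $\dcc(p_n,q_n)$ bounded but for which every curve in $\comp A$ joining them is disproportionately long. A consistency check guides the design: by Theorem~\ref{thm:main}, any counterexample must have at least one vertical projection that is \emph{not} nowhere dense, so I will arrange that (say) $\pi_y(A)$ is dense in a two-dimensional box of the $xt$-plane while $A$ itself stays totally disconnected. This is possible because a totally disconnected compact set can project densely onto a square; the set $A$ is, in effect, a ``dotted barrier'' whose shadow fills a region but which contains no nondegenerate continuum.

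First I would fix the scaling. Using the anisotropic dilations $\delta_r(x,y,t)=(rx,ry,r^2t)$, which multiply $\dcc$ by $r$, I would place the obstacle inside the vertical strip $\{(x,0,t): 0\le x\le 1,\ 0\le t\le 1\}$ (a piece of the vertical subgroup $\{y=0\}$, of cc-Hausdorff dimension $3$). The set $A$ is assembled from a nested family of \emph{barriers}, each barrier a finite---hence totally disconnected---collection of small pieces strung across the strip with one narrow designated passage, the passages at consecutive barriers staggered to opposite ends. The barriers are indexed by a Cantor set of $t$-levels accumulating at $t=0$, arranged so that the number of barriers lying between the level $t=\tau$ and $t=1$ grows without bound as $\tau\to 0$, while the horizontal extent of each barrier stays of order one.

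The quantitative heart is the single-barrier detour estimate. Because each barrier is totally disconnected it does not separate $\Heis$, so a curve may in principle thread between adjacent pieces rather than use the designated passage; the lower bound on length therefore cannot come from topology and must be extracted metrically. Here I would exploit the distinctive Heisenberg feature, the area--length relation: a horizontal curve satisfies $\dot t = 2(y\dot x - x\dot y)$, so its net $t$-increment equals $-4$ times the signed area swept by its $xy$-projection. Packing the pieces of each barrier so that any crossing which does \emph{not} use the designated passage is forced to sweep a definite amount of area---hence to have a definite length---per unit of $t$ gained converts ``threading'' into a genuine length cost. With this in hand, a path from $p_n$, located just below the barriers at level $t=\tau_n\to 0$, to a fixed point $q_n$ above the whole stack must traverse all $\approx N_n\to\infty$ intervening barriers, zigzagging horizontally by order one at each, so its length is $\gtrsim N_n$, whereas $\dcc(p_n,q_n)$ remains bounded; this forces the ratio of the intrinsic distance in $\comp A$ to $\dcc(p_n,q_n)$ to tend to infinity.

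Finally I would verify that $A$ has cc-Hausdorff dimension exactly $3$, using the Dimension Comparison Theorem (Theorem~\ref{DCT}) together with the fact that the $t$-direction contributes ``double'' to cc-dimension, so that a Cantor set of $t$-levels of Euclidean dimension $d$ has cc-dimension $2d$ and the horizontal dotting supplies the rest. The main obstacle is the tension between the two requirements: making the barriers hard to thread tends to demand denser and more numerous pieces, which pushes the dimension above $3$, whereas thinning the set toward dimension $3$ weakens the detour estimate. The construction must therefore sit exactly at the critical threshold where the obstruction is \emph{just} strong enough, and the delicate point is to confirm, via the area--length bound above, that the detour estimate survives all the way down to cc-dimension $3$ and no lower.
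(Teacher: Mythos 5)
Your construction cannot work, and the obstruction is visible from the paper's own Theorem \ref{thm:main} combined with your choice to place $A$ inside the vertical plane $\{y=0\}$. On that plane the projection $\pi_y$ is the \emph{identity}: $\pi_y(x,0,t)=(x,0,t+2x\cdot 0)=(x,0,t)$, so $\pi_y(A)=A$ viewed as a subset of the $xt$-plane, while $\pi_x(x,0,t)=(0,0,t)$, so $\pi_x(A)$ lies in a line. If $A$ is compact and totally disconnected, it has empty interior (an interior point would give a ball, hence a nondegenerate continuum, inside $A$), so $\pi_y(A)=A$ is closed with empty interior, i.e.\ nowhere dense --- exactly the hypothesis of Theorem \ref{thm:main}. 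Hence $\comp A$ \emph{is} quasiconvex for every such $A$, no matter how cleverly the barriers are packed. Your own consistency check is thus self-defeating: you correctly note that a counterexample needs a projection that is not nowhere dense, and then propose to get ``$\pi_y(A)$ dense in a two-dimensional box while $A$ stays totally disconnected,'' but for sets lying in $\{y=0\}$ the map $\pi_y$ is injective, so it cannot turn a nowhere dense set into a somewhere dense one; a dense projection forces $A$ to spread in the $y$-direction, which your construction forbids.

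The metric mechanism you invoke also fails concretely: the half-space $\{y>0\}$ is $1$-quasiconvex and disjoint from $A$, so any curve can leave the plane by an arbitrarily short $Y$-line segment, travel inside $\{y>0\}$ at essentially geodesic cost, and return --- bypassing every barrier at once. The area--length relation gives no extra cost here: sweeping area $|t_2-t_1|/4$ is required to change the $t$-coordinate whether or not the obstacle is present (it is already priced into $\dcc(p_n,q_n)$), and barrier pieces of zero thickness in $y$ never force \emph{additional} swept area, so threading is free. The paper's example uses precisely the opposite geometry: an obstacle that is huge in the horizontal directions and thin in $t$, namely $\ABox_0=[-10n,10n]\times[-10n,10n]\times[-1/2,1/2]$, so that the points $(0,0,\pm1)$, at bounded cc-distance $\sqrt{2\pi}$, force the $xy$-projection of any avoiding curve to exit a square of side $20n$, giving cc-length at least $20n$. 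Total disconnectedness is then achieved not by confining the set to a plane but by replacing the solid box with the Hakobyan--Herron maze, whose $\pi_t$-length lower bound survives retraction arguments and whose Euclidean dimension is $2$; the cc-dimension is pinned to $3$ from above by the Dimension Comparison Theorem and from below by Corollary \ref{cor:main}, since a set of cc-Hausdorff $3$-measure zero would have quasiconvex complement.
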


The Dimension Comparison Theorem in $\Heis$ allows us to convert assumptions regarding cc-Hausdorff dimension or measure into asumptions regarding Euclidean dimension or measure. The full statement of the Dimension Comparison Theorem in $\Heis$ appears in Subsection \ref{subsec:DCT}. For now, we observe that null sets for the Euclidean Hausdorff $2$-measure are necessarily null sets for the cc-Hausdorff $3$-measure. Hence Theorem \ref{thm:main} applies to closed null sets for the Euclidean Hausdorff $2$-measure. The example which we provide in the proof of Theorem \ref{thm:main-example} has Euclidean Hausdorff dimension equal to $2$.

This paper is organized as follows. In Section \ref{sec:background} we provide some additional background regarding analysis and geometry in the sub-Riemannian Heisenberg group. In particular, we define horizontal curves and the Carnot--Carath\'eodory metric, discuss the isometries and similarities of $(\Heis,\dcc)$, state the precise form of the Dimension Comparison Theorem, and define and discuss vertical projection mappings. We conclude Section \ref{sec:background} with short summaries of the proofs of Theorems \ref{thm:main} and \ref{thm:main-example}. In Sections \ref{S:pf} and \ref{S:ex} we give detailed proofs of these two theorems.

\

\paragraph{\bf Acknowledgements:} Research for this paper was conducted during visits of various subsets of the authors to the University of Illinois and the University of Cincinnati. The hospitality of these institutions is appreciated.

\section{Background}\label{sec:background}

In this section we briefly recall some well-known properties of the Heisenberg group which will be used in this paper.

\subsection{Horizontal curves}\label{subsec:horizontal}

Recall that the Heisenberg group is equipped with a pair of left-invariant vector fields
$$X=\frac\partial{\partial x} + 2y \frac\partial{\partial t} \quad\text{and}\quad Y=\frac\partial{\partial y} - 2x \frac\partial{\partial t}.$$
An absolutely continuous curve $\gamma\subset \Heis$ is \emph{horizontal} if its derivative lies almost everywhere in the span of $X$ and $Y$, i.e., for almost every $s$ one has $\gamma'(s)=a(s) X+b(s) Y$ for some \emph{control functions} $a$ and $b$. The \emph{Carnot--Carath\'eodory length (cc-length)} of a horizontal curve $\gamma$ is
$$
\ellcc(\gamma) = \int_\gamma \sqrt{a(s)^2+b(s)^2} \, ds \, ,
$$
which coincides with the Euclidean length of the $\pi_t$-projection of that curve into the $xy$-plane, where $\pi_t(x,y,t):=(x,y)$.

In this paper we use particularly simple horizontal curves that follow only one of the two vector fields $X$ and $Y$ at any given time. We refer to a compact horizontal curve $\gamma$ with endpoints $h_1, h_2$ as an \emph{$X$-line segment} if $b\equiv 0$ and a \emph{$Y$-line segment} if $a\equiv 0$. Clearly, such a $\gamma$ is in fact a Euclidean line segment, and we denote it by $[h_1, h_2]$.

A compact curve $\gamma$ is said to be a \emph{bang-bang curve} (the terminology is borrowed from control theory) if it can be written in the form
$$
\gamma = [h_1, h_2]\cup[h_2, h_3]\cup \cdots \cup [h_{n-1}, h_n],
$$
where each $[h_i, h_{i+1}]$ is either an $X$-line segment or a $Y$-line segment.

\subsection{The Carnot--Carath\'eodory distance and geodesics}\label{subsec:cc-distance}

The \emph{Carnot--Carath\'eodory distance (cc-distance)} between two points $p,q$ of $\Heis$, denoted $\dcc(p,q)$, is the infimum of the cc-lengths of all horizontal curves joining $p$ to $q$. It is well known that
\begin{enumerate}
\item[(i)] any two points of $\Heis$ can be joined by a cc-geodesic,
\item[(ii)] if $p$ and $q$ are not vertically separated (i.e., $\pi_t(p) \ne \pi_t(q)$), then there is a unique cc-geodesic joining $p$ to $q$, and
\item[(iii)] if $p=(x,y,t)$ and $q=(x,y,t')$ are vertically separated, then there is a one-parameter family of cc-geodesics joining $p$ to $q$, any two of which are related by a rigid Euclidean rotation about the vertical line through $p$ and $q$. The projection of any one of these geodesics into the $xy$-plane is a circle passing through $(x,y)$, whose area is equal to $\tfrac14|t'-t|$.
\end{enumerate}
It follows from (iii) and the definition of cc-length that the cc-distance between $p=(x,y,t)$ and $q=(x,y,t')$ is
$$
2\pi\sqrt{\frac{|t'-t|}{4\pi}} = \sqrt\pi|t'-t|^{1/2}.
$$

\subsection{The induced path metric on a domain in $\Heis$}

For a domain $\Omega$ in $\Heis$, the induced path metric $\dccOm{\Omega}$ is defined as follows: $\dccOm{\Omega}(p,q)$ is the infimum of the cc-lengths of all curves joining $p$ to $q$ in $\Omega$. Clearly, $\dccOm{\Omega}(p,q) \ge \dcc(p,q)$ for any $p$ and $q$. Quasiconvexity of $\Omega$ with quasiconvexity constant $C$ implies that $\dccOm{\Omega}$ is $C$-bi-Lipschitz equivalent to the restriction of $\dcc$ to $\Omega$. Conversely, if $\dccOm{\Omega}$ is $C$-bi-Lipschitz equivalent to $\dcc|_{\Omega\times\Omega}$, then $\Omega$ is $L$-quasiconvex for all $L>C$.

\subsection{Isometries and similarities of the Heisenberg group}

Isometries of the metric space $(\Heis,\dcc)$ include left translations
$$
h \mapsto h_0 * h, \qquad h_0 \in \Heis,
$$
and rotations
$$
(x,y,t) \mapsto (x\cos\theta-y\sin\theta,x\sin\theta+y\cos\theta,t), \qquad \theta \in \R.
$$
Moreover, for $r>0$, the group isomorphism
$$
\delta_r(x, y, t) = (rx, ry, r^2t)
$$
acts as a similarity of $\dcc$:
$$
\dcc( \delta_r h_1, \delta_r h_2) = r \dcc(h_1, h_2) \qquad h_1,h_2 \in \Heis\,.
$$

\subsection{The Dimension Comparison Theorem}\label{subsec:DCT}

The relationship between the cc-Hausdorff dimension and the Euclidean Hausdorff dimension of a subset of $\Heis$ is governed by the Dimension Comparison Theorem. The following theorem was proved by Balogh, Rickly and Serra-Cassano \cite{brsc:comparison}, see also \cite{MR2466427} for the corresponding statement in general Carnot groups.

\begin{thm}[Dimension Comparison Theorem in the Heisenberg group]\label{DCT}
Let $S \subset \Heis$. Then one has the sharp estimate $\beta_-(\dim_E S) \le \dim_{cc} S \le \beta_+(\dim_E S)$, where $\beta_-(\alpha) = \max\{\alpha,2\alpha-2\}$ and $\beta_+(\alpha) = \min\{2\alpha,\alpha+1\}$.
\end{thm}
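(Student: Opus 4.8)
The plan is to derive the two inequalities from complementary covering arguments and then to confirm sharpness through explicit self-similar examples. The stated isotropic comparisons --- $\id\colon(\Heis,\dcc)\to(\R^3,\dE)$ locally Lipschitz with locally $\tfrac12$-H\"older inverse --- give only the crude bounds $\dim_E S\le\dim_{cc}S\le 2\dim_E S$, so the sharp functions $\beta_\pm$ must come from the \emph{anisotropic} refinement of this comparison, namely the standard Ball--Box principle. Since $\dim_E$ and $\dim_{cc}$ are both local and bi-Lipschitz invariant, and left translations are cc-isometries that are Euclidean bi-Lipschitz on bounded sets, I may assume $S$ is bounded and use the following: for $\rho\le 1$, a cc-ball $\Ball(p,\rho)$ is comparable, up to absolute constants on the radius, to a Euclidean box of horizontal extent $\rho$ in each of the $x$- and $y$-directions and vertical extent $\rho^2$ in the $t$-direction. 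This box shape $\rho\times\rho\times\rho^2$ encodes the grading weights $1,1,2$ and drives every estimate below.

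For the upper bound, write $\alpha=\dim_E S$, fix $\alpha'>\alpha$, and take a Euclidean cover $\{B_{\E}(p_i,r_i)\}$ of $S$ with $\sum_i r_i^{\alpha'}$ arbitrarily small and all $r_i$ small. I would re-cover each $B_{\E}(p_i,r_i)$ by cc-balls in two different ways. First, a single cc-ball of radius comparable to $r_i^{1/2}$ contains $B_{\E}(p_i,r_i)$, since its comparable box has horizontal extent $\gtrsim r_i$ and vertical extent $\gtrsim r_i$; this yields a cc-cover with $\cH^s$-content $\sum_i(r_i^{1/2})^s=\sum_i r_i^{s/2}$, small once $s\ge 2\alpha'$, hence $\dim_{cc}S\le 2\alpha$. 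Second, stacking roughly $r_i^{-1}$ cc-balls of radius $r_i$ in the vertical direction covers $B_{\E}(p_i,r_i)$, since each such ball accounts for only the vertical extent $r_i^2$; this yields a cc-cover with $\cH^s$-content $\sum_i r_i^{-1}r_i^s=\sum_i r_i^{s-1}$, small once $s\ge\alpha'+1$, hence $\dim_{cc}S\le\alpha+1$. Together these give $\dim_{cc}S\le\min\{2\alpha,\alpha+1\}=\beta_+(\alpha)$.

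For the lower bound, one half is immediate: as $\id\colon(\Heis,\dcc)\to(\R^3,\dE)$ is locally Lipschitz and Lipschitz maps do not raise Hausdorff dimension, $\dim_E S\le\dim_{cc}S$. For the other half I would reverse the covering. Fix $s>\dim_{cc}S$ and a cc-cover $\{\Ball(p_i,\rho_i)\}$ of $S$ with $\sum_i\rho_i^s$ small and all $\rho_i$ small. Partitioning the horizontal $\rho_i\times\rho_i$ face of the box comparable to $\Ball(p_i,\rho_i)$ into subsquares of side $\rho_i^2$ shows this box is covered by roughly $\rho_i^{-2}$ Euclidean balls of radius comparable to $\rho_i^2$, the vertical extent $\rho_i^2$ needing no further subdivision. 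With $\alpha=\tfrac{s}{2}+1$ the resulting Euclidean cover has $\cH^\alpha$-content
\[
\sum_i \rho_i^{-2}\,(\rho_i^{2})^{\alpha}=\sum_i\rho_i^{2\alpha-2}=\sum_i\rho_i^{s},
\]
which is small while its radii $\rho_i^2$ tend to $0$. Hence $\cH^\alpha_{\E}(S)=0$, so $\dim_E S\le\tfrac{s}{2}+1$; letting $s\downarrow\dim_{cc}S$ gives $\dim_{cc}S\ge 2\dim_E S-2$. Combined with the Lipschitz bound, $\dim_{cc}S\ge\max\{\alpha,2\alpha-2\}=\beta_-(\alpha)$.

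Finally, sharpness is where the real work lies. For the lower envelope I would use subsets of a horizontal plane, on which $\dcc$ and $\dE$ are locally bi-Lipschitz so that $\dim_{cc}=\dim_E$ realizes $\beta_-$ for $\alpha\le 2$, together with suitable self-similar sets carrying vertical structure to realize $2\alpha-2$ for $2\le\alpha\le 3$. For the upper envelope I would use Cantor subsets of the $t$-axis, along which $\dcc$ comparable to $\dE^{1/2}$ forces $\dim_{cc}=2\dim_E$ for $\alpha\le 1$, and products of a vertical segment with horizontal Cantor sets to realize $\alpha+1$ for $1\le\alpha\le 3$. I expect the main obstacle to be precisely these dimension computations: the covering estimates above supply only upper bounds for $\dim_{cc}$ of the model sets, so the matching lower bounds require constructing Frostman-type mass distributions adapted to the cc-metric and estimating the cc-measure of balls against them. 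Once such a mass distribution is available for each model set, sharpness follows and $\beta_\pm$ are identified as the optimal comparison functions.
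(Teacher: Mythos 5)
The paper itself offers no proof of this statement: Theorem \ref{DCT} is imported by citation from Balogh--Rickly--Serra-Cassano \cite{brsc:comparison} (see also \cite{MR2466427}), so your proposal can only be measured against the literature. Your proof of the two \emph{inequalities} is correct and is essentially the standard argument: the Ball--Box comparison of $\Ball(p,\rho)$ with a $\rho\times\rho\times\rho^2$ box, the two re-coverings of a Euclidean $r$-ball (one cc-ball of radius $\sim r^{1/2}$, respectively $\sim r^{-1}$ stacked cc-balls of radius $r$) giving $\dim_{cc}S\le\min\{2\alpha,\alpha+1\}$, the locally Lipschitz identity giving $\dim_E S\le \dim_{cc}S$, and the reverse covering by $\sim\rho^{-2}$ Euclidean balls of radius $\rho^2$ giving $\dim_{cc}S\ge 2\dim_E S-2$. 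The exponent bookkeeping in all four estimates checks out, and this much already suffices for everything the present paper actually uses (namely $\dim_E S\le 2\Rightarrow\dim_{cc}S\le 3$ and the corresponding null-set statement).

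The genuine gap is sharpness, which is part of the stated theorem and which you explicitly leave open: the Frostman-type lower bounds for the model sets are precisely where the bulk of the work in \cite{brsc:comparison} lies, so what you have is a plan, not a proof. Moreover, one of your proposed model families rests on a false premise. There is no ``horizontal plane'' in $\Heis$ on which $\dcc$ and $\dE$ are locally bi-Lipschitz: the distribution spanned by $X$ and $Y$ is bracket-generating, hence non-integrable, so no $2$-dimensional surface is tangent to it. Indeed every $C^1$ surface in $\Heis$ (including the plane $\{t=0\}$, away from its characteristic point) has cc-Hausdorff dimension $3$, so any subset of such a surface with $\dim_E=\alpha\in(1,2]$ is exactly the wrong kind of candidate for realizing $\beta_-(\alpha)=\alpha$. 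Realizing the lower envelope on $1<\alpha\le 2$ requires genuinely fractal ``almost horizontal'' sets, built self-similarly from left translations composed with the dilations $\delta_r$, and the range $2<\alpha<3$ (via products of such sets with vertical Cantor sets) is built on top of that construction; this is the hardest part of \cite{brsc:comparison} and \cite{MR2466427}. As written, your argument establishes the estimate $\beta_-(\dim_E S)\le\dim_{cc}S\le\beta_+(\dim_E S)$ but not its sharpness; you should either cite sharpness or replace the horizontal-plane model by the self-similar constructions and carry out the mass-distribution estimates.
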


Figure \ref{fig:DCT} illustrates the set of allowed dimension pairs $(\alpha,\beta)$, $\alpha = \dim_E S$, $\beta = \dim_{cc} S$, for subsets $S$ of the Heisenberg group.

\begin{figure}[h]
\includegraphics[width=.55\textwidth]{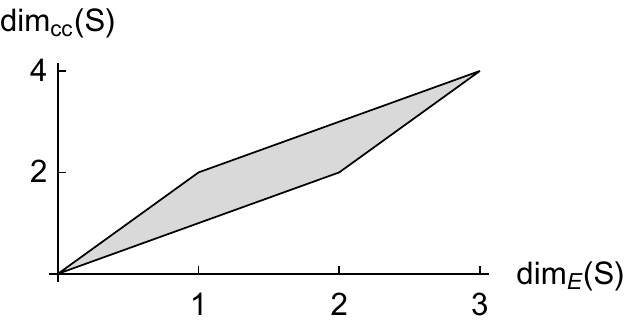}\caption{The Dimension Comparison Theorem in $\Heis$}\label{fig:DCT}
\end{figure}

In this paper, we will only use the following case of Theorem \ref{DCT}: \emph{If $S \subset \Heis$ has $\dim_E S \le 2$, then $\dim_{cc} S \le 3$.} In fact, if $S$ is a null set for Euclidean Hausdorff $2$-measure, then $S$ is a null set for cc-Hausdorff $3$-measure.

\subsection{Horizontal and vertical projection mappings}\label{subsec:projections}

The Heisenberg group admits two semidirect product decompositions, one associated to the splitting of $\Heis$ in terms of the $x$-axis and the $yt$-plane, and the other to the splitting of $\Heis$ in terms of the $y$-axis and the $xt$-plane. These two decompositions induce \emph{horizontal} natural projection maps $\Heis\xrightarrow{\pi_x,\pi_y}\Heis$, defined as follows:
\begin{align*}
  \pi_x(x,y,t)&:=(x,y,t)*(-x,0,0)=(0,y,t-2xy)\,, \\
  \pi_y(x,y,t)&:=(x,y,t)*(0,-y,0)=(x,0,t+2xy)\,.
\end{align*}

\begin{remark}
Recall that we also have the mapping $\pi_t:\Heis\rightarrow \R^2$ given by $\pi_t(x,y,t)=(x,y)$. The related mapping $(x,y,t)\mapsto (x,y,0)$ is a \emph{vertical projection}, which we will not use. 
\end{remark}
It is easy to see that a curve $\gamma\subset \Heis$ is an $X$-line segment (or $Y$-line segment) if and only if $\pi_x(\gamma)$ (respectively, $\pi_y(\gamma)$) is a single point. Although the maps $\pi_x$ and $\pi_y$ are not Lipschitz (when considered as maps from $(\Heis,\dcc)$ to either the $xt$- or $yt$-plane equipped with the cc-metric), it is nevertheless the case that $\pi_x$ and $\pi_y$ preserve null sets for the cc-Hausdorff $3$-measure; see \cite[Lemma 3.6]{cfo:betas} following from \cite{MR3511465}, and \cite[Proposition 4.3]{bcfmt:projections} for a weaker statement involving Hausdorff dimension.

\subsection{Summary of the proofs} \label{sec:summaryofproof}

To prove Theorem \ref{thm:main}, we start by showing that any two points $p,q$ in $\Heis$ can be joined by a bang-bang curve consisting of at most four segments and of length comparable to $\dcc(p,q)$. Assuming in addition that $p,q$ are in the complement of a closed set $A$, we then perturb each of the four segments so that it lies in the complement of $A$. As the line segments are perturbed, their endpoints no longer match up, but for sufficiently small perturbations the fact that $A$ is closed allows us to reconnect the corresponding endpoints by short geodesics, yielding the desired path between $p$ and $q$ in $\Heis \setminus A$.

The existence of perturbed segments lying in $\Heis \setminus A$ follows from the connection between $X$-line and $Y$-line segments and the projection mappings $\pi_x$ and $\pi_y$, and the assumption that $\pi_x(A)$ and $\pi_y(A)$ are nowhere dense.

\

Our proof of Theorem \ref{thm:main-example} is closely related to a similar example constructed for the Euclidean antecedent of this paper, \cite{hh:quasiconvexity}. We first show that it suffices to prove that for each $n$ there exists a set $A_n$ with the following properties.
\begin{enumerate}
\item $\Heis \setminus A_n$ is not $n$-quasi-convex.
\item $A_n$ is compact and totally disconnected.
\item The Euclidean Hausdorff dimension of $A_n$ is at least $2$.
\end{enumerate}
Note that here we switch already from the Heisenberg metric to the Euclidean metric to measure the dimension of $A_n$. We extend this philosophy further by modifying the metric involved in the definition of quasiconvexity.  Namely, rather than measuring the length of a curve $\gamma$ directly in $\Heis$, we first project $\gamma$ to $\R^2$ via the map $\pi_t$ and then measure the $\ell^1$ length of $\pi_t(\gamma)$. We refer to the resulting number as the \emph{$\pi_t$-length} of $\gamma$, and we consider the induced path pseudometrics $\dpi{}$ on $\Heis$ and $\dpi{\Omega}$ on $\Omega \subset \Heis$.

\begin{remark}
While for cc-rectifiable curves $\gamma$ the cc-length and the $\pi_t$-length are equivalent (up to a fixed multiplicative constant), the $\pi_t$-length is also finite for many cc-nonrectifiable curves. This fact allows us to establish the desired result without needing to maintain cc-rectifiability.
\end{remark}

We show that the complement of
$$
\ABox_0:=[-10n,10n]\times[-10n,10n]\times[0,1]
$$
is not $n$-convex by measuring $\dpi{\comp \ABox_0}(h^-, h^+)$, the $\pi_t$-distance in the complement of $\ABox_0$ between the points $h^+=(0,0,1)$ and $h^-=(0,0,0)$. We then exhibit a subset $A_n \subset \ABox_0$ such that the following is true.
\begin{enumerate}
\item $\dpi{\comp A_n}(h^+, h^-) \geq \dpi{\comp \ABox_0}(h^+, h^-)$.
\item The Euclidean Hausdorff dimension of $A_n$ is $2$.
\item The set $A_n$ is compact and totally disconnected.
\end{enumerate}

In fact, such a set was already constructed in \cite{hh:quasiconvexity}, and we provide a sketch of the construction for the reader's convenience.


\section{Closed sets with nowhere dense vertical projections have quasiconvex complement} \label{S:pf} 

The goal of this section is to prove Theorem \ref{thm:main}. We start by showing that any two points in $\Heis$ are connected by a bang-bang path of controlled length, consisting of at most four segments.

\begin{figure}[h]
\includegraphics[width=3in]{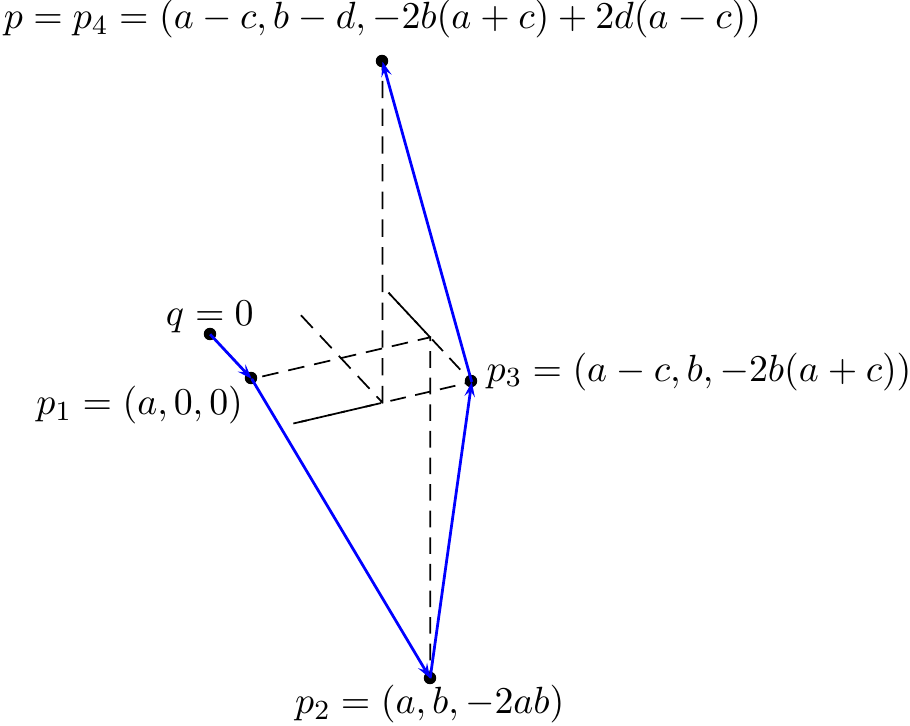}
\caption{A bang-bang path from $q=(0,0,0)$ to $p=(x,y,t)$}\label{f:PXYL}
\end{figure}
\begin{lemma} \label{L:PXYLpath} %
Any two points $p, q \in \Heis$ are connected by a path
$$\alpha=[p_0,p_1]\cup[p_1,p_2]\cup[p_2,p_3]\cup[p_3,p_4],$$
where $[p_0, p_1]$ and $[p_2, p_3]$ are $X$-line segments while $[p_1,p_2]$ and $[p_3, p_4]$ are $Y$-line segments, and
$\ellcc(\alpha)\le 5\sqrt{2} \dcc(p,q)\,.$
\end{lemma}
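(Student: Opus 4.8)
The plan is to reduce to the case $q=0$ by left translation and then construct $\alpha$ from an explicit four-step recipe, using the single remaining degree of freedom to keep the length under control. Since left translations are isometries of $(\Heis,\dcc)$ that carry the left-invariant fields $X,Y$ to themselves, they map $X$-line segments to $X$-line segments, $Y$-line segments to $Y$-line segments, and preserve cc-length; hence, translating on the left by $q^{-1}$, I may assume $q=p_0=0$ and write $p=p_4=(x,y,t)$, with $\dcc(p,q)=\dcc(0,p)$. An $X$-line segment of signed length $a$ issuing from $h$ terminates at $h*(a,0,0)$ and has cc-length $|a|$, while a $Y$-line segment of signed length $b$ terminates at $h*(0,b,0)$ with cc-length $|b|$. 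Composing the four right translations by $(a_1,0,0),(0,b_1,0),(a_2,0,0),(0,b_2,0)$ via the group law, the $X$-$Y$-$X$-$Y$ path from the origin terminates at the point with coordinates $a_1+a_2$, $b_1+b_2$, and $-2a_1b_1+2a_2b_1-2(a_1+a_2)b_2$. Thus $\alpha$ joins $0$ to $(x,y,t)$ exactly when
\[ a_1+a_2=x, \qquad b_1+b_2=y, \qquad 4a_2b_1 = t+2xy. \]

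These are three constraints on four unknowns, leaving one free parameter. Writing $T:=t+2xy$, I would choose $a_2,b_1$ with $|a_2|=|b_1|=\tfrac12\sqrt{|T|}$ and signs arranged so that $a_2b_1=T/4$ (the choice $|a_2|=|b_1|$ being the AM--GM-optimal way to realize a prescribed product at least horizontal cost), and then set $a_1:=x-a_2$ and $b_2:=y-b_1$. The triangle inequality applied to $a_1=x-a_2$ and $b_2=y-b_1$ then gives
\[ \ellcc(\alpha)=|a_1|+|b_1|+|a_2|+|b_2| \le |x|+|y|+2\bigl(|a_2|+|b_1|\bigr) = |x|+|y|+2\sqrt{|T|}. \]

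It remains to bound the right-hand side by $\dcc(0,p)$. Because the cc-length of any horizontal curve equals the Euclidean length of its $\pi_t$-projection, which joins $(0,0)$ to $(x,y)$, one has $\dcc(0,p)\ge\sqrt{x^2+y^2}$, so $|x|+|y|\le\sqrt2\sqrt{x^2+y^2}\le\sqrt2\,\dcc(0,p)$. For the vertical term I would parametrize the projection of a horizontal curve by arc length $s\in[0,\ell]$, where $\ell$ is its cc-length; since $|(x(s),y(s))|\le s$ and $dt=2y\,dx-2x\,dy$ along horizontal curves, this yields $|t|\le\int_0^\ell 2s\,ds=\ell^2$, and taking the infimum over horizontal curves gives $\sqrt{|t|}\le\dcc(0,p)$. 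Combining this with $\sqrt{2|xy|}\le\sqrt{x^2+y^2}\le\dcc(0,p)$ produces $\sqrt{|T|}\le\sqrt{|t|}+\sqrt{2|xy|}\le 2\,\dcc(0,p)$, whence
\[ \ellcc(\alpha)\le \sqrt2\,\dcc(0,p)+4\,\dcc(0,p)=(4+\sqrt2)\,\dcc(0,p)\le 5\sqrt2\,\dcc(p,q). \]

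The one genuinely non-formal point, and the step I expect to be the main obstacle, is the vertical estimate. The construction naturally produces the shifted height $T=t+2xy$ rather than $t$ itself, so I must both spend the free parameter efficiently to achieve the product $a_2b_1=T/4$ and then control $\sqrt{|T|}$ by the intrinsic distance, which only records $t$. The arc-length bound $\sqrt{|t|}\le\dcc(0,p)$ is exactly what bridges this gap. The degenerate vertical case $x=y=0$ (where $\alpha$ projects to a square of signed area $\tfrac14 t$) serves as a useful sanity check, and the stated constant $5\sqrt2$ is met with room to spare, since in fact $4+\sqrt2$ suffices.
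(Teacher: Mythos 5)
Your proof is correct, and its core construction is the same as the paper's: reduce to $q=0$ by a left translation, build an $X$-$Y$-$X$-$Y$ bang-bang path, and solve the group-law constraints for the four signed step lengths. Indeed, your clean formulation of the constraint system, $a_1+a_2=x$, $b_1+b_2=y$, $4a_2b_1=t+2xy$, is exactly what the paper's choice of $a,b,c,d$ solves implicitly. Where you diverge is in how the free parameter is spent and, more substantially, in the final comparison with $\dcc(p,q)$. The paper takes the asymmetric choice $|b_1|=\sqrt{|T|/2}$, $|a_2|=\tfrac12\sqrt{|T|/2}$ and bounds $\ellcc(\alpha)$ by a multiple of $|x|+|y|+|t|^{1/2}$, which it then compares with $\dcc$ via the Kor\'anyi gauge $\dhh$ and the fact $\dhh\le\dcc$; you take the symmetric, AM--GM-optimal choice $|a_2|=|b_1|=\tfrac12\sqrt{|T|}$ and instead prove two elementary lower bounds from scratch: $\dcc(0,p)\ge\sqrt{x^2+y^2}$ (length of the $\pi_t$-projection) and $\dcc(0,p)\ge\sqrt{|t|}$ (the arc-length estimate $|\dot t|\le 2|(x(s),y(s))|\le 2s$). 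This buys you a self-contained argument with the better constant $4+\sqrt2$, and it also sidesteps a slip in the paper's concluding chain: the asserted inequality $|x|+|y|+|t|^{1/2}\le\sqrt2\,\dhh(p,0)$ is false in general (at $(x,y,t)=(\tfrac1{\sqrt2},\tfrac1{\sqrt2},1)$ the left side is $1+\sqrt2$ while the right side is $2^{3/4}$); the lemma's conclusion survives with a repaired estimate, but your argument needs no repair. Your vertical estimate $\sqrt{|t|}\le\dcc(0,p)$ is the one genuinely new ingredient relative to the paper, and it is proved correctly.
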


\begin{proof}
Without loss of generality, assume that $q=(0,0,0)$ and write $p$ as $p=(x,y,t)\neq (0,0,0)$. The desired points $p_0, \ldots, p_4\in \Heis$ must satisfy, for some $a,b,c,d$, the conditions
\begin{align*}
  p_0&=q=(0,0,0)\,,  \\
  p_1&=p_0*(a,0,0)=(a,0,0)\,,  \\
  p_2&=p_1*(0,b,0)=(a,b,-2ab)\,,  \\
  p_3&=p_2*(-c,0,0)=(a-c,b,-2b(a+c))\,,  \\
  p_4&=p_3*(0,-d,0)=(a-c,b-d,-2[b(a+c)+d(c-a)])\,.
\end{align*}
If $t=-2xy$, such conditions hold with
$$
a:=x\,, \quad b:=y \,, \quad\text{and}\quad c:=d:=0.
$$
If $t\ne-2xy$ they hold with
\begin{align*}
  b&:=\begin{cases}
        \displaystyle \bigl|\frac{t}2 + xy \bigr|^{1/2}  &\text{if $t+2xy>0$}\,, \\
        \displaystyle -\bigl|\frac{t}2 + xy \bigr|^{1/2}  &\text{if $t+2xy<0$}\,.
      \end{cases}
\end{align*}
$$
a:=x-(t+2xy)/(4b)\,, \quad c:=a-x\,, \quad\text{and}\quad d:=b-y.
$$
It is straightforward to verify that $p_4=p$.  

We now compute the length of $\alpha$, assuming that $t\ne-2xy$ so $b\ne 0$. Since
\begin{gather*}
  \frac{|t+2xy|}{4|b|}=\frac{b^2}{2|b|}=\frac12\,|b|
  \intertext{and}
  |b|\le \sqrt{|xy|}+\sqrt{|t|/2} \le \frac{|x|+|y|}2 + \frac{|t|^{1/2}}{\sqrt{2}}\,,
  \intertext{we have}
  |a|+|b| \le |x|+\frac{|t+2xy|}{4|b|}+|b| \le |x|+\frac32\,|b|
  \intertext{and hence}
  2\bigl(|a|+|b|\bigr) \le 2|x|+3|b| \le \frac{7|x|+3|y|}2 + \frac{3}{\sqrt{2}}\,|t|^{1/2}.
\end{gather*}
Thus
\begin{align*}
  \ellcc(\alpha)&=|a|+|b|+|c|+|d|=|a|+|b|+|a-x|+|b-y|  \\
                &\le 2(|a|+|b|)+|x|+|y| \le \frac{9|x|+5|y|}2 + \frac{3}{\sqrt{2}}\,|t|^{1/2}  \\
                &\le 5\bigl(|x|+|y|+ |t|^{1/2}  \bigr) \le 5\sqrt{2} \dhh(p,0) \le 5\sqrt{2} \dcc(p,0),
\end{align*}
where in the last line we make use of the Koranyi distance (or gauge distance, or Cygan distance) $\dhh$, defined by left-invariance and the property 
$$\dhh(p,0) = \sqrt[4]{(x^2+y^2)^2+t^2},$$
as well as the standard fact that $\dcc$ is the path metric associated to $\dhh$.
\end{proof}%

We now prove Theorem \ref{thm:main} by adjusting the path given in Lemma \ref{L:PXYLpath}.

\begin{proof}[Proof of Theorem \ref{thm:main}]

Let $A$ be a subset of $\Heis$ such that $\pi_x(A)$ and $\pi_y(A)$ are nowhere dense. Fix $p, q\in \comp A$ and connect them, using Lemma  \ref{L:PXYLpath}, by a bang-bang path $\gamma$ that decomposes as segments $[p_1^-, p_1^+]\cup[p_2^-, p_2^+]\cup[p_3^-,p_3^+]\cup[p_4^-,p_4^+]$, where the first and third segments are $X$-line segments, and the second and fourth segments are $Y$-line segments. 

If $\gamma \subset \comp A$, we are done. Otherwise, we fix $\epsilon>0$, to be determined later, and show how to build an alternate path from $p$ to $q$ as illustrated in Figure~\ref{f:proofSketch}. Namely, we perturb the four segments, denoting the new endpoints in a bold font, e.g.\ we perturb $[p_1^-, p_2^+]$ to produce $[\textbf{p}_1^-, \textbf{p}_1^+]\subset \comp A$. The new line segments will maintain the direction and length of the original segments, but will (generically) no longer touch at the endpoints or go through the points $p$ and $q$. However, if the perturbation is small we will still be able to join the appropriate points by short geodesics. We now show how to carry out this plan, apart from finding the final curve joining $\textbf{p}_4^+$ to $q$.

\begin{figure}[h]
\includegraphics[width=3in]{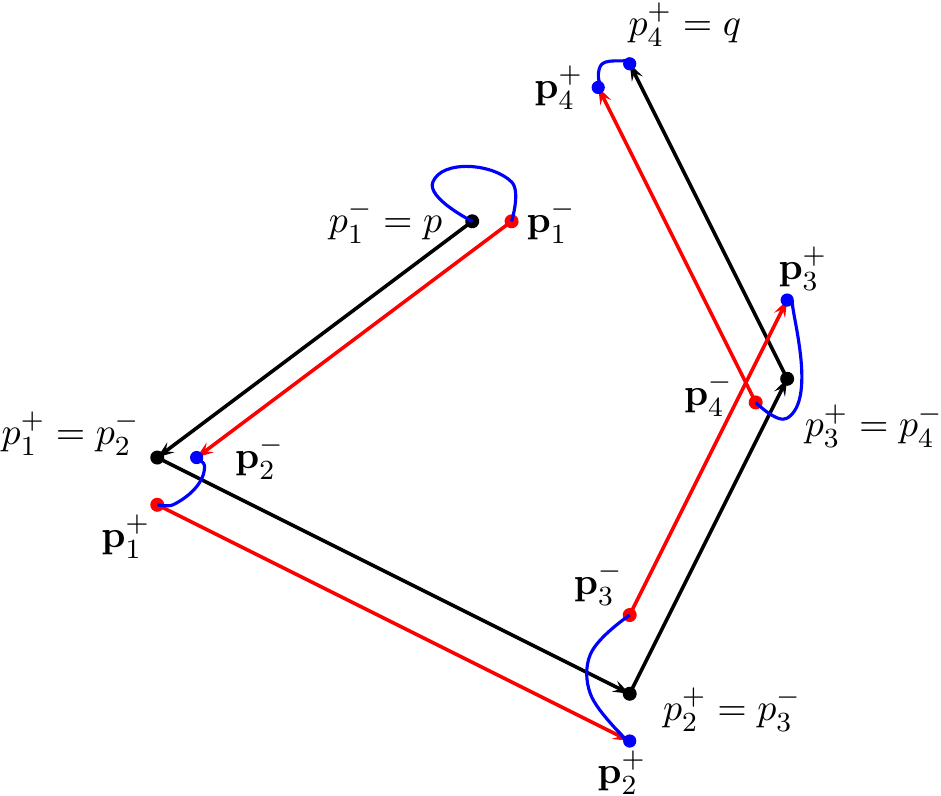}
\caption{To prove Theorem \ref{thm:main}, we perturb the (black) segments joining $p$ and $q$, producing (red) segments that avoid $A$. For sufficiently small perturbations, the endpoints can then be re-connected with (blue) geodesics outside of $A$.}\label{f:proofSketch}
\end{figure}

\begin{claim}\label{claim:perturbation}
There exist segments $[\textbf{p}_i^-,{\textbf p_i^+}]\subset \comp A$ for $i=1, \ldots, 4$ such that
\begin{enumerate}
\item $(p_i^-)^{-1}*p_i^+ = (\textbf p_i^-)^{-1}* \textbf{p}_i^+$,
\item $\dcc(p,\textbf{p}_1^-) < \epsilon$,
\item $\dcc(\textbf{p}_i^+,\textbf{p}_{i+1}^-)< \epsilon$ for $i=1, \ldots, 3$,
\end{enumerate}
Furthermore, there exist geodesic segments $\gamma_1, \ldots, \gamma_4 \subset \comp A$ of length at most $\epsilon$ so that $\gamma_1$ joins $p$ to $\textbf{p}_1^-$, and for $i=2, 3, 4$ the geodesic segment $\gamma_i$ joins $\textbf{p}_{i-1}^+$ to $\textbf{p}_i^-$.
\end{claim}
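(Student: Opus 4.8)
The plan is to exploit the correspondence between $X$-line (resp.\ $Y$-line) segments and the fibers of $\pi_x$ (resp.\ $\pi_y$): a compact segment is an $X$-line segment precisely when its $\pi_x$-image is a single point, and each fiber $\pi_x^{-1}(\xi)$ is itself a maximal $X$-line. Consequently, if $\xi\notin\pi_x(A)$ then the whole fiber $\pi_x^{-1}(\xi)$, and hence every $X$-line segment contained in it, is disjoint from $A$: a point of $A$ on the fiber would force $\xi\in\pi_x(A)$. The same holds for $\pi_y$ and $Y$-line segments. This is exactly where the hypothesis is used: since $\pi_x(A)$ and $\pi_y(A)$ are nowhere dense, their complements are dense, so any given fiber can be pushed to a nearby fiber missing the relevant projection, and the smoothness of $\pi_x,\pi_y$ then keeps the displaced segment arbitrarily cc-close to the original while forcing it into $\comp A$.

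First I would perturb the segments one at a time, moving from $p_1^-=p$ toward $p_4^+$. For the first ($X$-line) segment, set $\xi_1:=\pi_x(p_1^-)=\pi_x(p_1^+)$, choose $\xi_1'\notin\pi_x(A)$ near $\xi_1$, place $\mathbf p_1^-$ on the fiber $\pi_x^{-1}(\xi_1')$ near $p$, and define $\mathbf p_1^+:=\mathbf p_1^-*\bigl((p_1^-)^{-1}*p_1^+\bigr)$. Because the displacement $(p_1^-)^{-1}*p_1^+$ has the form $(a_1,0,0)$ and the fibers of $\pi_x$ are right cosets of the $x$-axis subgroup, right translation by it preserves each such fiber; hence $[\mathbf p_1^-,\mathbf p_1^+]$ is an $X$-line segment lying entirely in $\pi_x^{-1}(\xi_1')\subset\comp A$, with the same direction and length as $[p_1^-,p_1^+]$. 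This is condition (1), and continuity of the group law makes $\mathbf p_1^+$ close to $p_1^+$. I would then treat the second ($Y$-line) segment with target $\mathbf p_1^+$ in place of $p_2^-$: choose $\eta_2'\notin\pi_y(A)$ near $\pi_y(\mathbf p_1^+)$, place $\mathbf p_2^-$ on $\pi_y^{-1}(\eta_2')$ near $\mathbf p_1^+$, and define $\mathbf p_2^+$ by the same same-displacement rule. Alternating in this way through segments three and four produces all four perturbed segments in $\comp A$ obeying (1).

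To build the connectors, I would use that each perturbed endpoint lies in the open set $\comp A$, so its cc-distance to $A$ is strictly positive, together with the elementary fact that any cc-geodesic between two points at cc-distance $\delta$ is contained in the closed cc-ball of radius $\delta$ about either endpoint. Thus, once $\mathbf p_1^+\in\comp A$ has been fixed, I only require the perturbation of the second segment to be small enough that $\dcc(\mathbf p_1^+,\mathbf p_2^-)<\min\{\epsilon,\dcc(\mathbf p_1^+,A)\}$; then any geodesic $\gamma_2$ from $\mathbf p_1^+$ to $\mathbf p_2^-$ has length $<\epsilon$ and stays in $\comp A$. The identical device yields $\gamma_3$, $\gamma_4$, and $\gamma_1$ (the last using $\dcc(p,A)>0$, since $p\in\comp A$), giving conditions (2) and (3) and geodesics $\gamma_1,\dots,\gamma_4\subset\comp A$ of length at most $\epsilon$.

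The point demanding care—and the reason the perturbations must be selected sequentially rather than simultaneously—is that the junction points $p_1^+=p_2^-$, $p_2^+=p_3^-$, $p_3^+=p_4^-$ of the original bang-bang path need not lie in $\comp A$, so there is no uniform lower bound on their distance to $A$. Only after a segment has been displaced into $\comp A$ do we know that its endpoints sit a positive distance from $A$, and each subsequent perturbation must then be chosen small relative to that now-fixed distance. The remaining checks—that moving to a nearby fiber keeps the perturbed segment cc-close to the original, and that the same-displacement definition of $\mathbf p_i^+$ indeed keeps it in the chosen fiber—follow routinely from the explicit fiber description and the smoothness of $\pi_x$ and $\pi_y$, and should cause no trouble.
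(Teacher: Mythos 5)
Your proof is correct and follows essentially the same route as the paper's: perturb the four segments \emph{sequentially}, sliding each onto a nearby $\pi_x$- (resp.\ $\pi_y$-) fiber that misses $\pi_x(A)$ (resp.\ $\pi_y(A)$) --- which exists by density of the complements of the nowhere dense projections --- keeping the same displacement so condition (1) holds, and reconnect endpoints by short geodesics inside small cc-balls centered at the already-perturbed endpoints, which lie in the open set $\Heis\setminus A$ and hence have positive distance to $A$. If anything, your explicit discussion of why the perturbations must be ordered sequentially (the original junction points $p_i^+=p_{i+1}^-$ may lie in $A$, so only the perturbed endpoints carry a usable distance to $A$) is stated more carefully than in the paper's own writeup, which at one point centers a ball at the unperturbed point $p_1^+$ rather than at $\mathbf{p}_1^+$.
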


\begin{proof}
In view of the first condition, it  suffices to specify each $\textbf{p}_1^-, \ldots, \textbf{p}_4^- \in \comp A$ satisfying the second and third conditions.

We start by choosing $\textbf{p}_1^-$ near $p_1^-=p$. Noting that $A$ is closed, choose $0<\epsilon'<\epsilon$ so that $\Ball(p_1^-, \epsilon')\subset \comp A$.

The map $\pi_x$ is open and hence $\pi_x(\Ball(p_1^-, \epsilon'))$ is an open subset of the $yt$-plane. Thus, $\pi_x(\Ball(p_1^-, \epsilon'))\cap \pi_x(\comp A) \neq \emptyset$, and we may choose $\textbf{p}_1^-$ so that
$$
\pi_x(\textbf{p}_1^-) \in \pi_x(\comp A),
$$
$$
\dcc(\textbf{p}_1^-, p_1^-)<\epsilon'<\epsilon,
$$
and furthermore $\textbf{p}_1^-$ is connected to $p=p_1^-$ by a geodesic segment $\gamma_1\subset \Ball(p_1^-, \epsilon') \subset \comp A$.

We now choose $\textbf{p}_2^-$ near $\textbf{p}_1^+$. Noting that $A$ is closed, choose $0<\epsilon'<\epsilon$ so that $\Ball(p_1^+, \epsilon')\subset \comp A$. As above, $\pi_y(\comp A)$ is dense in the $xt$-plane, and $\pi_y(\Ball(p_1^+, \epsilon))$ is open in the $xt$-plane. Thus, $\pi_y(\Ball(\textbf{p}_1^+, \epsilon))\cap \pi_y(\comp A) \neq \emptyset$, and we may choose $\textbf{p}_2^-$ so that
$$
\pi_x(\textbf{p}_2^-) \in \pi_x(\comp A),
$$
$$
\dcc(\textbf{p}_2^-, \textbf{p}_1^+)<\epsilon'<\epsilon,
$$
and furthermore $\textbf{p}_2^-$ is connected to $p_1^+$ by a geodesic segment $\gamma_2\subset \Ball(\textbf{p}_1^+, \epsilon') \subset \comp A$.

We likewise obtain $\textbf{p}_3^-$ and $\gamma_3$ by perturbing $\textbf{p}_2^+$, and $\textbf{p}_4^-$ and $\gamma_4$ by perturbing $\textbf{p}_3^+$, with corresponding connecting geodesic segments $\gamma_3$ and $\gamma_4$.
\end{proof}

It remains to connect $\textbf{p}_4^+$ to $q$. Because $A$ is closed, there is some $\delta<\dcc(p,q)$ such that $\Ball(q, \delta)\subset \comp A$. We may choose a geodesic curve $\gamma_5$ of length at most $\delta$ joining $\textbf{p}_4^+$ to $q$ if we have $\dcc(\textbf{p}_4^+, q)<\delta$. Indeed, this can be arranged by a suitable choice of $\epsilon$.

\begin{claim}\label{claim:distanceToQ}
Then there exists $\epsilon_0$ depending only on $\delta$, $p$ and $q$ such that if $\epsilon<\epsilon_0$, then
$\dcc(q, \textbf{p}_4^+)< \delta.$
\end{claim}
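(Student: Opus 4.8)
The plan is to express the terminal point $\textbf{p}_4^+$ of the perturbed path purely in terms of the starting point $p$, the group displacements of the four original segments, and the small ``gap'' errors accumulated at each joint, and then to exploit the continuity of the group operations. Write $v_i := (p_i^-)^{-1}*p_i^+$ for the displacement of the $i$th segment; by condition (1) of Claim \ref{claim:perturbation} this is also the displacement of the perturbed segment, so that $\textbf{p}_i^+ = \textbf{p}_i^- * v_i$. Because the original bang-bang path is connected with $p_1^-=p$ and $p_4^+=q$, we have $q = p * v_1 * v_2 * v_3 * v_4$, and the displacements $v_1,\dots,v_4$ depend only on $p$ and $q$ (through Lemma \ref{L:PXYLpath}).

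Next I would introduce the error terms $e_1 := p^{-1}*\textbf{p}_1^-$ and $e_{i+1} := (\textbf{p}_i^+)^{-1}*\textbf{p}_{i+1}^-$ for $i=1,2,3$. Conditions (2) and (3) of Claim \ref{claim:perturbation}, together with the left-invariance of $\dcc$, give $\dcc(0, e_i) < \epsilon$ for each $i$. Unwinding the definitions recursively yields
$$
\textbf{p}_4^+ = p * e_1 * v_1 * e_2 * v_2 * e_3 * v_3 * e_4 * v_4,
$$
and hence, again by left-invariance,
$$
\dcc(q, \textbf{p}_4^+) = \dcc\bigl(0,\, q^{-1}*\textbf{p}_4^+\bigr) = \dcc\bigl(0,\, v_4^{-1}v_3^{-1}v_2^{-1}v_1^{-1}\,e_1\,v_1\,e_2\,v_2\,e_3\,v_3\,e_4\,v_4\bigr).
$$
The right-hand side is a product in which the small errors $e_i$ are conjugated by (fixed) displacements, and it collapses to the identity $0$ precisely when all $e_i$ equal $0$.

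To finish, I would regard the element $F(e_1,e_2,e_3,e_4) := v_4^{-1}v_3^{-1}v_2^{-1}v_1^{-1}\,e_1\,v_1\,e_2\,v_2\,e_3\,v_3\,e_4\,v_4$ as a function of the four error variables. Since multiplication and inversion in $\Heis$ are continuous (indeed polynomial) maps and $F(0,0,0,0)=0$, the composite $(e_1,\dots,e_4)\mapsto \dcc(0,F)$ is continuous and vanishes at the origin. As the cc-balls about $0$ form a neighborhood basis and the $v_i$ are fixed by $p,q$, there is an $\epsilon_0=\epsilon_0(\delta,p,q)>0$ so that $\dcc(0,e_i)<\epsilon_0$ for all $i$ forces $\dcc(0,F)<\delta$; taking $\epsilon<\epsilon_0$ then gives the claim.

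The only genuine subtlety, and the reason this is not a one-line triangle-inequality estimate, is that $\dcc$ is merely left-invariant: right translation (equivalently, conjugation) by the $v_i$ distorts distances, so the four individual $\epsilon$-errors do not simply add. The point to emphasize is that this distortion is controlled because the displacements $v_1,\dots,v_4$ are fixed once $p$ and $q$ are chosen; continuity of the group law on this fixed, bounded configuration is exactly what converts smallness of each $e_i$ into smallness of the accumulated error.
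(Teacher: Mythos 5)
Your proof is correct, and it reaches the conclusion by a genuinely different route than the paper. The paper keeps the perturbed points themselves in play: it writes $q*(\textbf{p}_4^+)^{-1}$ as a conjugate of the small gap $(\textbf{p}_3^+)^{-1}*\textbf{p}_4^-$ by a perturbed point, then marches backwards through $\textbf{p}_3^+,\textbf{p}_2^+,\textbf{p}_1^+$, at each step invoking a quantitative almost-commutativity lemma (Lemma \ref{lemma:commutative}: small elements nearly commute with elements of bounded norm) plus the triangle inequality, with an explicit $\delta/8$-per-step budget. Because its conjugators are the perturbed points, whose norms depend on $\epsilon$, the paper needs that lemma to hold uniformly over a bounded set, hence the choice $C>100(\norm{p}+\norm{q})$. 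Your argument eliminates the perturbed points entirely: the identity
$$
q^{-1}*\textbf{p}_4^+ = v_4^{-1}v_3^{-1}v_2^{-1}v_1^{-1}\,e_1\,v_1\,e_2\,v_2\,e_3\,v_3\,e_4\,v_4
$$
exhibits the total error as a product in which each $e_i$ is conjugated only by \emph{fixed} elements determined by $p$ and $q$, so a single appeal to pointwise continuity of the (polynomial) group operations and of $h\mapsto\dcc(0,h)$ at the identity suffices; no uniformity and no error bookkeeping are needed. What the paper's route buys is a reusable quantitative lemma and explicit constants; what yours buys is brevity, transparency of the dependence $\epsilon_0=\epsilon_0(\delta,p,q)$, and avoidance of the mild circularity involved in bounding conjugators that themselves move with $\epsilon$. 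The one ingredient you should state explicitly is the standard fact that $\dcc$ induces the Euclidean topology on $\R^3$ (the paper records that the identity map $\Heis\to\R^3$ is a homeomorphism): this is exactly what makes $(e_1,\dots,e_4)\mapsto\dcc(0,F(e_1,\dots,e_4))$ continuous and lets cc-balls serve as a neighborhood basis in your final step.
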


\begin{proof}
Let us denote, for $h\in \Heis$, $\norm{h}:=\dcc(h,0)$. Our goal is equivalent to bounding $\norm{q*(\textbf{p}_4^+)^{-1}}$ by $\delta$. To this end, expand $q$ as
$$
q=p_3^+*[(p_4^-)^{-1}*p_4^+]
$$
and $\textbf{p}_4^+$ as
$$
\textbf{p}_4^+ = \textbf{p}_3^+ * [ (\textbf{p}_3^+)^{-1} * \textbf{p}_4^-] * [(p_4^-)^{-1}*p_4^+].
$$
Combining these, we obtain
\begin{align*}
q*(\textbf{p}_4^+)^{-1}&=
p_3^+*[(p_4^-)^{-1}*p_4^+]* [(p_4^-)^{-1}*p_4^+]^{-1} * [ (\textbf{p}_3^+)^{-1} * \textbf{p}_4^-]^{-1} * (\textbf{p}_3^+)^{-1}\\
								&=p_3^+* [ (\textbf{p}_3^+)^{-1} * \textbf{p}_4^-]^{-1} * (\textbf{p}_3^+)^{-1}.
\end{align*}
Since $\norm{(\textbf{p}_3^+)^{-1} * \textbf{p}_4^-}<\epsilon$, we would like to use the triangle inequality to conclude that  $q*(\textbf{p}_4^+)^{-1}$ and $p_3^+* (\textbf{p}_3^+)^{-1}$ are $\epsilon$-close, but we are hampered by the non-commutativity of $\Heis$. However, we may use the following well-known and easily-proven result.

\begin{lemma}\label{lemma:commutative}
Let $C>0$. For each $\eta>0$, there exists $\epsilon>0$ such that if $\norm{h_1}<C$ and $\norm{h_2}<\epsilon$, then $\dcc( h_1*h_2, h_2*h_1) < \eta.$
\end{lemma}

We fix $C>100 (\norm{p}+\norm{q})$ and apply Lemma \ref{lemma:commutative} with $\eta=\delta/8$ to obtain our $\epsilon_0$. Without loss of generality, we assume that also $\epsilon_0<\delta/8$.

We then have from Lemma \ref{lemma:commutative} and $\epsilon<\epsilon_0$ that
$$\dcc(p_3^+* [ (\textbf{p}_3^+)^{-1} *\textbf{p}_4^-]^{-1} * (\textbf{p}_3^+)^{-1}, p_3^+* (\textbf{p}_3^+)^{-1}* [ (\textbf{p}_3^+)^{-1} * \textbf{p}_4^-]^{-1}  )<\delta/8,$$
and so by the triangle inequality
$$\dcc(q*(\textbf{p}_4^+)^{-1}, p_3^+* (\textbf{p}_3^+)^{-1})< \delta/4.$$
We likewise obtain
$$\dcc( p_3^+* (\textbf{p}_3^+)^{-1},  p_{2}^+* (\textbf{p}_{2}^+)^{-1})<\delta/4$$
$$\dcc( p_2^+* (\textbf{p}_2^+)^{-1},  p_{1}^+* (\textbf{p}_{1}^+)^{-1})<\delta/4$$
and finally by the choice of $\textbf{p}_1^+$,
$$\norm{ p_{1}^+* (\textbf{p}_{1}^+)^{-1}} < \epsilon < \delta/8.$$
Combining these four estimates completes the claim.
\end{proof}

We now complete the proof of Theorem \ref{thm:main}. Let us assume that $\epsilon$ is smaller than both $\epsilon_0$ provided by Claim \ref{claim:distanceToQ} and $\dcc(p,q)$. Then Claim \ref{claim:perturbation} and Claim \ref{claim:distanceToQ} show how to choose a path $\gamma\subset A$ joining $p$ to $q$ as in Figure \ref{f:proofSketch}. Furthermore, the length of $\gamma$ is bounded above by the length of the original bang-bang path plus $5\dcc(p,q)$, as desired.
\end{proof}

\begin{remark}\label{rem:cor-main-alternate-proof}
Corollary \ref{cor:main} can also be proved by appealing to the ACL characterization of Sobolev functions on $\Heis$, and geometric properties of Sobolev removable sets. Let us sketch the argument. First, as already noted, the condition that $A$ is a null set for the cc-Hausdorff $3$-measure implies that $\pi_x(A)$ and $\pi_y(A)$ are also null sets for the cc-Hausdorff $3$-measure. Thus $A$ is removable for ACL functions, and hence also for Sobolev functions on $\Heis$. (See, for instance, Theorem 2.2 in \cite{tang:cr} for the relationship between the ACL and Sobolev conditions for Heisenberg source.) It follows that $A$ is a null set for Sobolev capacity, and consequently that $\Heis\setminus A$ is a Loewner space in the sense of Heinonen and Koskela \cite{hk:quasi}. By \cite[Theorem 3.13]{hk:quasi}, $\Heis\setminus A$ is quasiconvex.
\end{remark}

\section{A compact and totally disconnected set of cc-Hausdorff dimension three with nonquasiconvex complement} \label{S:ex} 

Here we prove Theorem \ref{thm:main-example}. We first recall the statement of that theorem.

\begin{thm}\label{thm:ex}
There exists a compact, totally disconnected set $A\subset \Heis$ of Hausdorff dimension $3$ such that $A^c=\Heis\setminus A$ is not quasiconvex.
\end{thm}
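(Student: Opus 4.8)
The plan is to reduce the construction of a full three-dimensional example in $\Heis$ to a family of partial examples and then exhibit the relevant set, following the Euclidean antecedent \cite{hh:quasiconvexity} as outlined in Subsection \ref{sec:summaryofproof}. First I would establish the reduction: it suffices to produce, for each $n \in \N$, a compact, totally disconnected set $A_n \subset \Heis$ with $\dim_E A_n \ge 2$ whose complement is not $n$-quasiconvex. Given such a family, I would assemble a single set $A$ by placing shrunken and translated copies of the $A_n$ into disjoint regions of $\Heis$ (using the dilations $\delta_r$ and left translations, which are similarities and isometries respectively), arranging the copies to accumulate only at a single point so that the union remains compact and totally disconnected; the failure of $n$-quasiconvexity for each piece forces $A^c$ to fail quasiconvexity for every constant, hence to be non-quasiconvex. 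The dimension bound $\dim_E A_n \ge 2$ passes to the union, and then the Dimension Comparison Theorem (Theorem \ref{DCT}, in the form $\dim_E \le 2 \Rightarrow \dim_{cc} \le 3$, together with sharpness) upgrades this to $\dim_{cc} A = 3$.

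Next I would carry out the two normalizations flagged in the summary. The first replaces cc-length by the \emph{$\pi_t$-length}, the $\ell^1$ length of the $xy$-projection $\pi_t(\gamma)$, with associated path pseudometric $\dpi{\Omega}$. Since cc-length and $\pi_t$-length are comparable on cc-rectifiable curves, a lower bound on the $\pi_t$-distance in $\comp A$ yields (up to a fixed constant) the same lower bound on the cc-induced path distance, so it is enough to force $\dpi{\comp A}$ to be large relative to $\dpi{}$. The advantage is that the $\pi_t$-length ignores the $t$-coordinate entirely and is finite even for cc-nonrectifiable curves, so I need only reason about planar $\ell^1$ geometry while keeping track of the height constraint imposed by $A$.

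Concretely I would take the slab $\ABox_0 = [-10n,10n]\times[-10n,10n]\times[0,1]$ and the vertically separated points $h^+ = (0,0,1)$, $h^- = (0,0,0)$, and show $\dpi{\comp \ABox_0}(h^+,h^-)$ is large: any curve in $\comp \ABox_0$ joining $h^+$ to $h^-$ must either pass over the top or around the side of the box, and because a horizontal curve cannot change its $t$-coordinate without enclosing signed area in its $\pi_t$-projection (recall from item (iii) of Subsection \ref{subsec:cc-distance} that the enclosed area equals $\tfrac14|t'-t|$), the projection $\pi_t(\gamma)$ is forced to travel far in the $xy$-plane, giving a $\pi_t$-length $\gtrsim n$ while $\dpi{}(h^+,h^-)$ stays bounded. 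I would then invoke the set $A_n \subset \ABox_0$ constructed in \cite{hh:quasiconvexity}: a compact, totally disconnected set of Euclidean dimension $2$ with $\dpi{\comp A_n}(h^+,h^-) \ge \dpi{\comp \ABox_0}(h^+,h^-)$, meaning $A_n$ blocks all the "cheap" paths just as effectively as the solid box while being thin enough to be totally disconnected. Since that construction is already in the literature I would only sketch it.

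The main obstacle I anticipate is the precise quantitative lower bound on $\dpi{\comp \ABox_0}(h^+,h^-)$, i.e.\ making rigorous that any horizontal (or merely $\pi_t$-rectifiable) curve escaping the unit-height slab must accrue $\pi_t$-length growing with $n$. This is exactly where the sub-Riemannian structure enters through the area–height relation, and one must handle the alternative that the curve goes \emph{around} the box rather than over it; the factor $10n$ in the side lengths is chosen so that the detour around the side is itself of length $\gtrsim n$, closing off both escape routes simultaneously. A secondary technical point is verifying that the union $A = \bigcup_n \delta_{r_n}(g_n * A_n)$ is genuinely totally disconnected and compact at the accumulation point, and that the dimension is preserved under the assembly; these are routine but require care in choosing the scaling factors $r_n$ and translations $g_n$.
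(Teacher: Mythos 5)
Your overall architecture matches the paper's: reduce to a sequence of compact, totally disconnected sets $A_n$ whose complements are not $n$-quasiconvex, assemble rescaled and translated copies into disjoint balls accumulating at a point, pass to the $\pi_t$-length pseudometric, prove the lower bound for the box $\ABox_0$, and quote the Hakobyan--Herron construction. However, there is a genuine gap at the final step, the claim that $\dim_{cc} A = 3$. From $\dim_E A = 2$, Theorem \ref{DCT} gives only $2 \le \dim_{cc} A \le 3$, since the lower comparison function is $\beta_-(2) = \max\{2, 2\cdot 2-2\} = 2$, not $3$. Appealing to ``sharpness'' of the Dimension Comparison Theorem cannot close this gap: sharpness asserts that \emph{some} set realizes each admissible dimension pair, and says nothing about your particular set $A$; pairs $(\dim_E,\dim_{cc})=(2,\beta)$ with any $\beta\in[2,3]$ are admissible. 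The paper's key move, which your proposal is missing, is to obtain the lower bound from its own positive result: Corollary \ref{cor:main} states that a closed set of cc-Hausdorff $3$-measure zero has quasiconvex complement, so since $\comp{A}$ is \emph{not} quasiconvex, $A$ must have positive cc-Hausdorff $3$-measure, whence $\dim_{cc} A \ge 3$. Note also that the DCT is used in the opposite direction from what you wrote: what one needs from the $A_n$ is $\dim_E A_n \le 2$ (to get $\dim_{cc} A \le \beta_+(2) = 3$), not $\dim_E A_n \ge 2$; the latter inequality plays no role. Without the contrapositive of Corollary \ref{cor:main} (or a direct computation of the cc-dimension of the Hakobyan--Herron set, which neither you nor the paper attempts), your argument establishes only $2 \le \dim_{cc} A \le 3$.

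A secondary inaccuracy: your lower bound for $\dpi{\comp{\ABox_0}}(h^+,h^-)$ is attributed to the area--height relation, but that mechanism only yields a bound independent of $n$ (changing height by $1$ forces the projection to enclose area $\tfrac14$, which by the isoperimetric inequality costs length of order $1$, not of order $n$). The correct mechanism, as in Lemma \ref{claim:projection}, is purely topological: the cylinder $\pi_t^{-1}\bigl(\pi_t(\ABox_0)\bigr)$ minus the box is disconnected, with $h^+$ and $h^-$ in different components, so any connecting curve in $\comp{\ABox_0}$ must exit the cylinder; hence $\pi_t(\gamma)$ leaves the square $[-10n,10n]^2$ and returns to the origin, giving length at least $20n$. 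The sub-Riemannian structure enters only through the identity that cc-length equals the Euclidean length of the $\pi_t$-projection, and through the value $\dcc(h^+,h^-)=\sqrt{2\pi}$.
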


We first show how to deduce Theorem \ref{thm:ex} as a consequence of the following proposition.


\begin{prop}\label{claim:combineAn}
For each integer $n\geq 1$, there exists a set $A_n \subset \Heis$ such that the following conditions hold:
\begin{enumerate}
\item[(i)] $\comp A_n$ is not $n$-quasi-convex,
\item[(ii)] $A_n$ is compact and totally disconnected,
\item[(iii)] The Euclidean Hausdorff dimension of $A_n$ is at most $2$.
\end{enumerate}
\end{prop}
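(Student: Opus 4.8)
The plan is to follow the strategy outlined in Subsection~\ref{sec:summaryofproof}: rather than working with $\dcc$ directly, I would measure obstructions in the more flexible $\pi_t$-pseudometric $\dpi{}$. The point is that along any horizontal curve the cc-length dominates $\tfrac{1}{\sqrt2}$ times the $\pi_t$-length, since $|x'|+|y'|\le\sqrt2\,\sqrt{(x')^2+(y')^2}$; consequently a lower bound on the $\pi_t$-distance between two points of a domain yields a lower bound on its induced cc-length distance. I would therefore fix the vertically separated points $h^-=(0,0,0)$ and $h^+=(0,0,1)$, recall from Subsection~\ref{subsec:cc-distance} that $\dcc(h^-,h^+)=\sqrt\pi$, and aim to show that every admissible competitor has $\pi_t$-length at least $20n$.

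First I would dispose of the solid box $\ABox_0=[-10n,10n]\times[-10n,10n]\times[0,1]$. The decisive observation is one-dimensional in the $t$-variable: if $\gamma$ joins $h^-$ to $h^+$ inside $\comp{\ABox_0}$, then its $t$-coordinate is continuous and runs from $0$ to $1$, so by the intermediate value theorem there is a first parameter $s_1$ with $t(s_1)=\tfrac12$. At that instant $t(s_1)\in[0,1]$, so avoidance of $\ABox_0$ forces $(x(s_1),y(s_1))\notin[-10n,10n]^2$, whence $|x(s_1)|+|y(s_1)|>10n$. Splitting $\gamma$ at $s_1$ and using that the $\pi_t$-length of a subarc dominates the $\ell^1$-distance between its endpoints (the origin at both ends of $\gamma$, and a point of $\ell^1$-norm $>10n$ at $s_1$), I obtain $\pi_t$-length$(\gamma)\ge 10n+10n=20n$. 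Hence $\dpi{\comp{\ABox_0}}(h^-,h^+)\ge 20n$, and the comparison above gives $\dccOm{\comp{\ABox_0}}(h^-,h^+)\ge 10\sqrt2\,n > n\sqrt\pi = n\,\dcc(h^-,h^+)$, so $\comp{\ABox_0}$ is not $n$-quasiconvex.

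The heart of the matter is to replace the solid box by a compact, totally disconnected set $A_n\subseteq\ABox_0$ with $\dim_E A_n\le 2$ (chosen to avoid $h^\pm$) for which the \emph{same} bound persists, that is, $\dpi{\comp{A_n}}(h^-,h^+)\ge \dpi{\comp{\ABox_0}}(h^-,h^+)$. I would take for $A_n$ the self-similar Cantor-type obstacle of \cite{hh:quasiconvexity}, a multi-scale thinning of $\ABox_0$ that is totally disconnected---so that $\comp{A_n}$ remains connected, the two points stay joinable, and the distance is finite---yet still carries full Euclidean Hausdorff dimension $2$. The subtle point, and the \textbf{main obstacle}, is the distance inequality itself: since $A_n\subseteq\ABox_0$ one has trivially $\dpi{\comp{A_n}}\le\dpi{\comp{\ABox_0}}$, so the content is the \emph{reverse} inequality, namely that removing material to achieve total disconnectedness and to lower the dimension to $2$ opens up no short $\pi_t$-shortcut. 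Here the naive intermediate-value argument breaks down, because the competitor may now thread the gaps of $A_n$ at small radius while $t$ passes through $\tfrac12$; instead one must exploit the lift relation $t'=2(yx'-xy')$, which ties the required height gain $\Delta t=1$ to the signed area $\tfrac14$ enclosed by the planar projection $\pi_t(\gamma)$. The set $A_n$ is engineered precisely so that enclosing this much signed area while avoiding $A_n$ costs as much $\ell^1$-length as going around the entire box; verifying this through the scale-invariance of the construction and a summation over scales is the technical core, and is exactly what is carried out in \cite{hh:quasiconvexity}.

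Finally I would assemble the pieces. The distance inequality combined with the box bound yields $\dpi{\comp{A_n}}(h^-,h^+)\ge 20n$, and the cc-versus-$\pi_t$ comparison then gives $\dccOm{\comp{A_n}}(h^-,h^+)\ge 10\sqrt2\,n> n\,\dcc(h^-,h^+)$, which is precisely the failure of $n$-quasiconvexity demanded in~(i). Properties~(ii) and~(iii)---compactness, total disconnectedness, and $\dim_E A_n\le 2$---are built into the construction of $A_n$, completing the proof of the proposition.
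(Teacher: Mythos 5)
Your proposal follows essentially the same route as the paper: a box-avoidance lower bound obtained from the intermediate value theorem applied to the $t$-coordinate, the comparison $\ellcc(\gamma)\ge \tfrac{1}{\sqrt2}\,(\pi_t\text{-length of }\gamma)$, and an appeal to the Hakobyan--Herron construction from \cite{hh:quasiconvexity} for the transfer inequality $\dpi{\comp{A_n}}(h^-,h^+)\ge \dpi{\comp{\ABox_0}}(h^-,h^+)$. However, two points need correction.

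First, with your normalization $\ABox_0=[-10n,10n]\times[-10n,10n]\times[0,1]$, $h^-=(0,0,0)$, $h^+=(0,0,1)$, both endpoints lie \emph{in} $\ABox_0$ (on its boundary), so there are no curves in $\comp{\ABox_0}$ joining them at all; thus $\dpi{\comp{\ABox_0}}(h^-,h^+)=+\infty$, your box lemma holds only vacuously, and the transfer inequality you then demand of $A_n$ becomes unsatisfiable, since $\dpi{\comp{A_n}}(h^-,h^+)$ is finite once $\comp{A_n}$ is connected. (Your own intermediate-value argument already breaks at the parameter $s=0$, where the curve sits inside the box.) This glitch is inherited from the paper's summary section; the paper's actual proof avoids it by taking $\ABox_0=[-10n,10n]\times[-10n,10n]\times[-1/2,1/2]$ and $h^\pm=(0,0,\pm1)$, strictly outside the box, after which your argument goes through verbatim.

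Second, your description of the ``technical core'' is wrong in an instructive way. You invoke the horizontal lift relation $t'=2(yx'-xy')$ and the signed area enclosed by $\pi_t(\gamma)$; but competitors for the $\pi_t$-distance are arbitrary \emph{continuous} curves --- this is precisely the point of passing to $\pi_t$-length, which charges nothing for vertical motion and assumes no horizontality --- so the lift relation imposes no constraint whatsoever, and signed area plays no role (indeed, \cite{hh:quasiconvexity} is a purely Euclidean paper). What actually yields the transfer inequality, per the paper's sketch, is a retraction (maze) argument: $A_n$ is the intersection of nested unions of boxes whose $\pi_t$-shadows overlap so heavily that any curve avoiding a given generation of boxes can be pushed off that generation, and ultimately out of $\ABox_0$ itself, without increasing its $\pi_t$-length. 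Since you use the citation as a black box, the logical skeleton of your proof survives, but the mechanism you ascribe to the cited construction is not one that could prove the inequality.
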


\begin{proof}[Proof of Theorem \ref{thm:ex}]
We suppose that sets $A_n$ as in the proposition exist, and show how to construct the desired set $A$.

For each $A_n$, the non-quasi-convexity condition implies that there exists a pair of points $h^-_n, h^+_n\in \comp A_n$ such that $$\dccOm{\comp{A_n}}(h^-_n, h^+_n)\geq n \dcc(h^-_n, h^+_n).$$
Because $A_n$ is compact, it is contained in some ball $\Ball(h_n, r_n)$. Enlarging $r_n$ if necessary, we may assume that the points $h^-_n$ and $h^+_n$ are also contained in $\Ball(h_n, r_n)$. Rescaling by a dilation map if necessary, we may furthermore assume that $r_n<n^{-2}/10$. Lastly, applying an (isometric left-multiplication) translation if necessary, we may assume $h_n=(0, 0, n^{-1})$. These normalizations do not affect the conditions on $A_n$: (i) is unchanged because $f\circ \delta_{r}$ is a cc-similarity, (ii) is unchanged because $f\circ \delta_{r}$ is a homeomorphism, and (iii) is unchanged because $f\circ \delta_{r'}$ is in fact a Euclidean affine mapping of $\R^3$.

Note that the balls $\Ball(h_n, r_n)$ containing the sets $A_n$ and points $h^\pm_n$ are disjoint, and let
$$
A:=\{(0,0,0)\}\cup (\cup_{n=1}^\infty A_n).
$$
The set $A$ is compact, totally disconnected, has Euclidean Hausdorff dimension at most $2$, and is not quasiconvex. By Corollary \ref{cor:main}, $A$ has cc-Hausdorff dimension at least $3$. By Theorem  \ref{DCT}, $A$ has cc-Hausdorff dimension at most $3$. Thus, $A$ verifies all the conditions of Theorem \ref{thm:ex}.
\end{proof}

It remains to prove Proposition \ref{claim:combineAn}. We begin with a lemma.

\begin{lemma}\label{claim:projection}
The complement of the box $\ABox_0:=[-10n, 10n]\times [-10n, 10n]\times[-1/2,1/2]$ is not $n$-quasi-convex. In particular, if $h^+:=(0,0,1)$ and $h^-:=(0,0,-1)$, then
$$
\dccOm{\comp{\ABox_0}}(h^+, h^-)\geq 20n>n\cdot \dcc(h^+, h^-).
$$
\end{lemma}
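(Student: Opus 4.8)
The plan is to bound from below the cc-length of an arbitrary horizontal curve $\gamma$ joining $h^+$ to $h^-$ inside $\comp{\ABox_0}$, since $\dccOm{\comp{\ABox_0}}(h^+,h^-)$ is by definition the infimum of the cc-lengths of such curves (non-horizontal curves contribute infinite length and may be ignored). The crucial observation, recalled in Subsection \ref{subsec:horizontal}, is that $\ellcc(\gamma)$ equals the Euclidean length of the projected planar curve $\pi_t(\gamma)$. Writing $\gamma(s)=(x(s),y(s),t(s))$, I note that $\pi_t(h^+)=\pi_t(h^-)=(0,0)$, so $\pi_t(\gamma)$ is a loop in the $xy$-plane based at the origin; it therefore suffices to show that this loop has Euclidean length at least $20n$.

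Next I would exploit the fact that $\gamma$ must cross the horizontal slab $\{|t|\le \tfrac12\}$. Since $t$ moves continuously from $t=1$ to $t=-1$, the intermediate value theorem gives a parameter $s_0$ with $t(s_0)=0$, so in particular $|t(s_0)|\le\tfrac12$. As $\gamma(s_0)\in\comp{\ABox_0}$, the point must fail at least one of the defining inequalities of $\ABox_0$; since the inequality $|t(s_0)|\le\tfrac12$ already holds, we are forced to have $\max\{|x(s_0)|,|y(s_0)|\}>10n$. Consequently the projected point $\pi_t(\gamma(s_0))=(x(s_0),y(s_0))$ lies at Euclidean distance greater than $10n$ from the origin. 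The loop $\pi_t(\gamma)$ thus runs from the origin out to a point at distance $>10n$ and back to the origin, and splitting it at $s_0$ and applying the triangle inequality to each arc shows its length is at least $10n+10n=20n$. Hence $\ellcc(\gamma)\ge 20n$ for every admissible $\gamma$, giving $\dccOm{\comp{\ABox_0}}(h^+,h^-)\ge 20n$.

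For the final strict inequality, I would compute $\dcc(h^+,h^-)$ directly from the vertical-separation formula of Subsection \ref{subsec:cc-distance}: since $h^\pm=(0,0,\pm1)$ are vertically separated with $|t'-t|=2$, one gets $\dcc(h^+,h^-)=\sqrt{\pi}\,\sqrt{2}=\sqrt{2\pi}<20$. Therefore $20n> n\sqrt{2\pi}=n\,\dcc(h^+,h^-)$, so the pair $h^+,h^-$ witnesses the failure of $n$-quasiconvexity of $\comp{\ABox_0}$.

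I expect the only delicate point to be the verification that the crossing parameter $s_0$ simultaneously lies in the slab and outside the box; this is precisely where the restriction $t\in[-\tfrac12,\tfrac12]$ in the definition of $\ABox_0$ is used. Notably, the argument requires no information about the fine structure of Heisenberg geodesics nor the area-swept identity for $t$: the entire lower bound reduces to the planar fact that a loop passing through a point far from the origin is long, combined with the identification $\ellcc(\gamma)=\operatorname{length}(\pi_t(\gamma))$.
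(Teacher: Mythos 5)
Your proof is correct and follows essentially the same route as the paper: project via $\pi_t$, use $\ellcc(\gamma)=\operatorname{length}(\pi_t(\gamma))$, note the projected loop based at the origin must exit the square $[-10n,10n]^2$, and conclude the length bound $20n > n\sqrt{2\pi}$. Your intermediate-value-theorem argument simply makes explicit the step the paper states tersely as ``must also exit the rectangle.''
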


\begin{proof}
Let $\gamma$ be any rectifiable path in $\comp{\ABox_0}$ joining $h^+$ to $h^-$. The curve $\pi_t(\gamma)$ starts and terminates at the origin of $\R^2$, but must also exit the rectangle $\pi_t(\ABox_0)=[-10n, 10n]\times[-10n, 10n]$. Thus, $\pi_t(\gamma)$ has Euclidean length at least $20n$. Since the image of a cc-rectifiable curve under $\pi_t$ is a Euclidean rectifiable curve of the same length, the cc-length of $\gamma$ is also at least $20n$. Since $\dcc(h^+, h^-)=\sqrt{2\pi}$, we have the desired bound on $\dccOm{\comp {\ABox_0}}(h^+, h^-)$.
\end{proof}

Note that in the proof of Lemma \ref{claim:projection}, we only measure length of curves after projecting to $\R^2$. Motivated by this observation, we make the following definition.

\begin{defi}
The \emph{$\pi_t$-length} of a continuous curve $\gamma \subset \Heis$ is the \emph{taxicab ($\ell^1$) length} of its projection $\pi_t \gamma$ to $\R^2$.

Given points $h_1, h_2$ in a domain $\Omega \subset \Heis$, the \emph{$\pi_t$-distance in $\Omega$} between $h_1$ and $h_2$, denoted $\dpi{\Omega}(h_1, h_2)$, is  the infimum of $\pi_t$-lengths of all continuous curves $\gamma \subset \Omega$ joining $h_1$ and $h_2$.
\end{defi}

\begin{remark}
Note that while $\dpi{\Heis}(h_1, h_2)$ is just the $\ell^1$ distance between $\pi_t(h_1)$ and $\pi_t(h_2)$, the $\pi_t$-distance $\dpi{\Omega}$ on a subset $\Omega\subset \Heis$ does not have such a simple expression.
\end{remark}

Critically, $\pi_t$-length does not distinguish between the Heisenberg and Euclidean geometries on $\R^3$. For our purposes, it is more flexible than either Heisenberg or Euclidean length. In particular, if $\gamma\subset \R^3$ is a curve, then:
\begin{enumerate}
\item if $\gamma$ is cc-rectifiable, then its cc-length agrees with the $\ell^2$ length of $\pi_t(\gamma)$, which in turn is bounded below by the $\pi_t$-length of $\gamma$ divided by $\sqrt{2}$,
\item if $\gamma$ is Euclidean rectifiable, then its Euclidean length is bounded below the $\ell^2$ length of $\pi_t(\gamma)$, which in turn is bounded below by the $\pi_t$-length of $\gamma$ divided by $\sqrt{2}$,
\item $\gamma$ may have finite $\pi_t$-length even if it is not cc rectifiable,
\item $\gamma$ may have finite $\pi$-length even if it is not Euclidean rectifiable,
\item vertical subsegments in $\gamma$ do not contribute to its $\pi_t$-length.
\end{enumerate}

\begin{lemma}\label{claim:dpiReduction}
Let $A \subset \ABox_0$. If $\dpi{\comp{A}}(h^+, h^-) \geq \dpi{\comp{\ABox_0}}(h^+, h^-)$, then the complement of $A$ is not $n$-quasiconvex.
\end{lemma}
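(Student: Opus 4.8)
The plan is to exhibit $h^+$ and $h^-$ themselves as the pair witnessing the failure of $n$-quasiconvexity, i.e.\ to prove that
\[
\dccOm{\comp A}(h^+, h^-) > n\,\dcc(h^+, h^-).
\]
Since $A \subseteq \ABox_0$ and every point of $\ABox_0$ has $t$-coordinate in $[-1/2,1/2]$, while $h^\pm = (0,0,\pm 1)$, both test points lie in $\comp A$ and are therefore admissible.

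First I would convert the left-hand side from cc-length to $\pi_t$-length using the comparison recorded just before the statement: for any cc-rectifiable curve $\gamma$ the quantity $\ellcc(\gamma)$ equals the Euclidean ($\ell^2$) length of $\pi_t\gamma$, which dominates $\tfrac1{\sqrt2}$ times the taxicab ($\ell^1$) length of $\pi_t\gamma$, i.e.\ $\tfrac1{\sqrt2}$ times the $\pi_t$-length of $\gamma$; this last step is just the planar inequality $\|v\|_{\ell^2} \ge \tfrac1{\sqrt2}\|v\|_{\ell^1}$ applied to the velocity. Any cc-rectifiable $\gamma \subset \comp A$ from $h^+$ to $h^-$ is in particular a continuous competitor in the infimum defining $\dpi{\comp A}$, so $\ellcc(\gamma) \ge \tfrac1{\sqrt2}\,\dpi{\comp A}(h^+,h^-)$; taking the infimum over such $\gamma$ (curves of infinite cc-length being irrelevant to $\dccOm{\comp A}$) yields
\[
\dccOm{\comp A}(h^+, h^-) \ge \tfrac1{\sqrt2}\,\dpi{\comp A}(h^+, h^-).
\]

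Next I would supply the two numerical inputs. The hypothesis gives $\dpi{\comp A}(h^+,h^-) \ge \dpi{\comp{\ABox_0}}(h^+,h^-)$, and the ``must exit the rectangle'' argument of Lemma~\ref{claim:projection} --- now applied to a merely continuous curve $\gamma \subset \comp{\ABox_0}$ and read off in $\ell^1$ rather than $\ell^2$ --- shows $\dpi{\comp{\ABox_0}}(h^+,h^-) \ge 20n$: the intermediate value theorem produces a parameter where the $t$-coordinate vanishes, forcing $\pi_t\gamma$ outside $[-10n,10n]^2$ there, so the projection (which starts and ends at the origin) has $\ell^1$ length at least $20n$. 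Chaining the three displays gives
\[
\dccOm{\comp A}(h^+, h^-) \ge \tfrac1{\sqrt2}\cdot 20n = 10\sqrt2\,n,
\]
whereas $n\,\dcc(h^+,h^-) = n\sqrt{2\pi}$. Since $10\sqrt2 > \sqrt{2\pi}$, the strict inequality holds and $\comp A$ fails to be $n$-quasiconvex.

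The argument is short once the cc-length versus $\pi_t$-length comparison is in hand, and I expect the only genuinely delicate point to be the lower bound $\dpi{\comp{\ABox_0}}(h^+,h^-) \ge 20n$: one must check that the topological exit-the-rectangle reasoning of Lemma~\ref{claim:projection} survives the passage from cc-rectifiable to arbitrary continuous curves (this is precisely why $\pi_t$-length, being finite on many cc-nonrectifiable curves, is the right notion here), and that the constant $10\sqrt2$ genuinely exceeds $\sqrt{2\pi}$ so the failure of $n$-quasiconvexity is strict.
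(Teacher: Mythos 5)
Your proof is correct and follows essentially the same route as the paper's: the chain $\dccOm{\comp A}(h^+,h^-) \ge \tfrac{1}{\sqrt2}\,\dpi{\comp A}(h^+,h^-) \ge \tfrac{1}{\sqrt2}\,\dpi{\comp{\ABox_0}}(h^+,h^-) \ge \tfrac{20n}{\sqrt2} > n\sqrt{2\pi} = n\,\dcc(h^+,h^-)$ is exactly the paper's displayed argument. The only difference is that you explicitly justify the $\ell^1$ exit bound $\dpi{\comp{\ABox_0}}(h^+,h^-)\ge 20n$ for merely continuous curves, a step the paper leaves implicit (its Lemma \ref{claim:projection} is stated for cc-length), which is a welcome bit of added care rather than a deviation in approach.
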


\begin{proof}
Under the stated assumptions,
\begin{align*}\dccOm{\comp{A}}(h^+, h^-) &\geq
\frac{1}{\sqrt 2}\dpi{\comp{A}}(h^+, h^-) \geq \frac{1}{\sqrt 2} \dpi{\comp{\ABox_0}}(h^+, h^-)\\&\geq \frac{20n}{\sqrt{2}} > n \sqrt{2\pi} = n \dcc(h^+, h^-).  \qedhere
\end{align*}
\end{proof}

The preceding discussion understood, our task no longer involves quasiconvexity in either $\Heis$ or in Euclidean $\R^3$; we need only consider the $\pi_t$-lengths of curves. The proof of Theorem \ref{thm:ex} is complete once we recall the following construction from \cite{hh:quasiconvexity}.

\begin{thm}[Hakobyan--Herron]\label{thm:HHEx}
There exists a set $A_n \subset \ABox_0$ that satisfies:
\begin{enumerate}
\item \label{metriccondition} $\dpi{\comp{A_n}}(h^+, h^-) \geq \dpi{\comp{\ABox_0}}(h^+, h^-)$,
\item \label{measurecondition} the Euclidean Hausdorff dimension of $A_n$ is equal to $2$,
\item \label{topologicalcondition} $A_n$ is compact and totally disconnected.
\end{enumerate}
\end{thm}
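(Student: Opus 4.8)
The plan is to build $A_n$ by an explicit iterated (Cantor-type) scheme inside the box $\ABox_0$ and then to check the three listed properties. The two preceding lemmas have already done the conceptual work: Lemma~\ref{claim:dpiReduction} shows that property~(\ref{metriccondition}), a statement purely about $\pi_t$-lengths, already defeats $n$-quasiconvexity, while Lemma~\ref{claim:projection} pins down the relevant target $\dpi{\comp{\ABox_0}}(h^+,h^-)$, which it bounds below by $20n$. Since the $\pi_t$-length of a curve depends only on the $\ell^1$-length of its planar shadow $\pi_t(\gamma)$ and is blind to the vertical direction, it records no difference between the Heisenberg and the Euclidean geometries. Consequently the example of \cite{hh:quasiconvexity} built for the Euclidean antecedent transplants essentially verbatim: what is needed is a compact, totally disconnected obstacle of Euclidean Hausdorff dimension $2$ that is, for the purpose of $\pi_t$-length, no less obstructive than the solid box.

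Concretely, I would realize $A_n$ as a nested intersection $A_n=\bigcap_{k\ge 0}U_k$, where $U_0=\ABox_0$ and each $U_{k+1}\subset U_k$ is a finite union of pairwise disjoint closed boxes obtained from the boxes of $U_k$ by subdividing and retaining a prescribed sub-collection. Compactness is then automatic, and total disconnectedness (property~(\ref{topologicalcondition})) follows once the side lengths of the generation-$k$ boxes tend to $0$ and distinct retained boxes are separated by definite gaps. The retention pattern must meet two competing demands: the number and side lengths of the retained boxes are tuned so that the Euclidean Hausdorff dimension of $A_n$ equals $2$ (property~(\ref{measurecondition}), with the lower bound supplied by a mass-distribution estimate), while the geometric positions of the retained boxes must be staggered from one generation to the next so as to force the detour estimate below.

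The heart of the matter is property~(\ref{metriccondition}), which I would prove by contradiction. Suppose $\gamma\subset\comp{A_n}$ joins $h^+$ to $h^-$ with $\pi_t$-length strictly less than $\dpi{\comp{\ABox_0}}(h^+,h^-)$. Since $\gamma$ must descend through the full vertical extent of the slab while avoiding $A_n$, the staggered multi-scale structure should force its shadow $\pi_t(\gamma)$ to oscillate horizontally: at each generation, the only way to keep descending through the gaps between the retained boxes is to translate sideways toward an offset opening of the next generation, and these horizontal displacements accumulate. Quantifying this oscillation through the Banach indicatrix of $\pi_t(\gamma)$ (equivalently, by producing a box-avoiding competitor curve whose $\pi_t$-length does not exceed that of $\gamma$ and then invoking Lemma~\ref{claim:projection}) should yield $\ell^1$-length at least $\dpi{\comp{\ABox_0}}(h^+,h^-)$, the desired contradiction.

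The main obstacle is precisely this detour estimate for the \emph{limit} set rather than for its finite approximants. A totally disconnected compactum does not separate $\R^3$, so $\comp{A_n}$ is connected and no single generation literally blocks $\gamma$; moreover passing from $U_k$ to $A_n=\bigcap_kU_k$ only enlarges the admissible family of competing curves, so a per-generation bound does not transfer to the limit for free. The delicate point is therefore to guarantee that the Cantor refinement opens no cheap shortcut: one must show that whenever $\gamma$ exploits a gap left by a generation-$k$ box, the generation-$(k+1)$ boxes sitting inside that gap reinstate the obstruction at the finer scale, so that a definite horizontal cost is paid at \emph{some} scale and the per-scale contributions sum to the required total. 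This scale-by-scale bookkeeping, which is exactly the content of the construction in \cite{hh:quasiconvexity}, is where the real work lies; the remaining verifications of dimension, compactness, and total disconnectedness are routine consequences of the chosen combinatorial parameters.
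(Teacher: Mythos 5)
Your proposal follows essentially the same route as the paper's sketch: an iterated Cantor-type maze of nested boxes inside $\ABox_0$, with compactness and total disconnectedness automatic, dimension $2$ by tuning the box parameters, and property~(\ref{metriccondition}) obtained by retracting any curve in $\comp{A_n}$ to a box-avoiding competitor of no greater $\pi_t$-length and invoking Lemma~\ref{claim:projection} --- including the key observation that the limit-set issue is handled scale by scale (the paper does this via compactness, which forces $\gamma$ to meet only finitely many boxes, so one pushes it out starting from the deepest level). Both your write-up and the paper defer the detailed per-scale bookkeeping to the Hakobyan--Herron construction in \cite{hh:quasiconvexity}, so your proposal is correct at the same level of rigor as the paper's own proof.
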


For the reader's convenience, we sketch the main ideas in the proof of Theorem \ref{thm:HHEx}.

\begin{proof}[Sketch of proof of Theorem \ref{thm:HHEx}]
The box $\ABox_0$ certainly satisfies condition \eqref{metriccondition}, but it does not satisfy the remaining conditions. Towards the latter, let $\ABox_1 \subset \ABox_0$ be the union of $24$ smaller boxes $\ABox_{(1)}, \ldots, \ABox_{(24)}$, arranged as in Figure \ref{fig:boxes}.

\begin{figure}
\includegraphics[width=.5\textwidth]{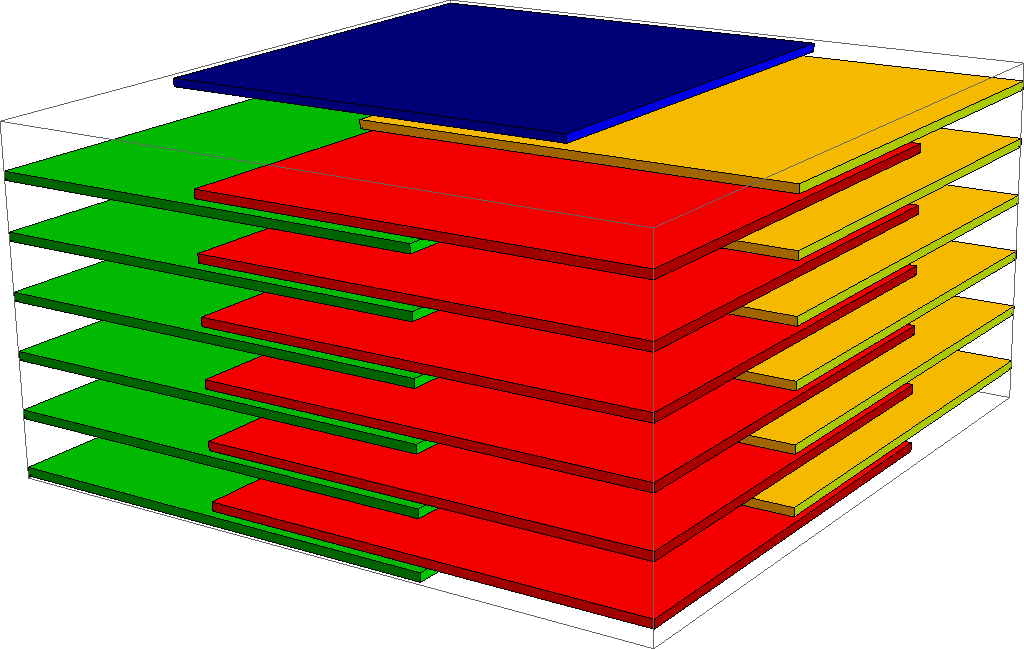}
\label{fig:boxes}\caption{A maze of disjoint boxes $\ABox_{(1)}, \ldots, \ABox_{(24)}$ inside $\ABox_0$}
\end{figure}
It is easy to see that it is inefficient for a curve to enter $\ABox_0$ but avoid $\ABox_1$. Indeed, consider $h_1, h_2 \in \partial \ABox_0$. If $h_1$ and $h_2$ are on the same face of $\partial \ABox_0$, it is obvious that any path $\gamma$ can be retracted to the boundary without increasing its $\pi_t$-length. Points on different faces can be handled in the same way, except for the case of one point on the top face and the other on the bottom face. In that case, fix a path $\gamma$ between $h_1$ and $h_2$ in the complement of $\ABox_1$, and distinguish two subcases.
If $\pi_t(\gamma)$ leaves the open rectangle $\interior \pi_t (\ABox_0)$, then it is easy to retract $\gamma$ to $\partial \ABox_0$ without increasing its $\pi_t$-length.
If $\pi_t(\gamma)$ doesn't leave the open rectangle $\interior \pi_t (\ABox_0)$, then it must dodge the rectangles comprising $\ABox_1$. A simple argument tracking the different paths that $\gamma$ can take shows that the $\pi_t$-length of $\gamma$ must be at least $40n$, provided $\pi_t(\ABox_{(1)}), \ldots, \pi_t(\ABox_{(24)})$ have sufficient pairwise overlap. In this case, one constructs a shorter path between $p_1$ and $p_2$ directly along the boundary $\partial \ABox_0$.

Continuing inductively, construct inside each box $\ABox_{(i_1, \ldots, i_j)}$ of level $j$ 24 boxes  $\ABox_{(i_1, \dots, i_j, 1)}, \ldots, \ABox_{(i_1, \dots, i_j, 24)}$ of level $j+1$ and denote by $\ABox_j$ the union of all boxes of level $j$. This gives a sequence of nested compact sets $\ABox_0 \supset \ABox_1 \supset \cdots$. Define $A_n := \cap_j \ABox_j$; this set clearly satisfies condition \eqref{topologicalcondition}.

To verify condition \eqref{metriccondition}, let $\gamma$ be a curve joining $h^+$ and $h^-$ in $\comp{A_n}	$. It suffices to show that it is longer than some curve in the complement of $\ABox_0$. By compactness, $\gamma$ intersects only finitely many boxes $\ABox_{(i_1, \ldots, i_j)}$ and in particular some box $\ABox_{(i_1, \ldots, i_j)}$ of maximal level among these. But as above, we may adjust $\gamma$ to not enter $\ABox_{(i_1, \ldots, i_j)}$ without increasing its length. Proceeding inductively, we may push $\gamma$ out of all of the sub-boxes of $\ABox_0$ and eventually out of $\ABox_0$ itself, without increasing its $\pi_t$-length, as desired.

Finally, since $\pi_t(A_n)$ is a square, $A_n$ has Euclidean Hausdorff dimension at least $2$. A careful choice of the boxes used to construct $A_n$ and a box-counting-dimension argument show that $A_n$ must have dimension exactly equal to $2$. This verifies the last condition \eqref{measurecondition} and completes the proof.
\end{proof}

\bibliographystyle{acm}
\bibliography{bib}

\end{document}